\def\R{\mathbb{R}}
\def\Z{\mathbb{Z}}
\def\T{\mathbb{T}}
\newtheorem*{mainthm}{Main Theorem}
\newtheorem{defi}[subsection]{Definition}
\newtheorem{prop}[subsection]{Proposition}
\newtheorem{lemma}[subsection]{Lemma}
\newtheorem{cor}[subsection]{Corollary}
\newtheorem{rmk}[subsection]{Remark}
\numberwithin{equation}{section}
\begin{document}

\title[Transitive endomorphisms with critical points]{Transitive endomorphisms with critical points}
\author{Wagner Ranter}
\date{\today}

\maketitle

\begin{abstract}
We show that a non-wandering endomorphism on the torus with topological degree at least two, hyperbolic linear part,
and for which the critical points are in some sense ``generic'' is transitive. This is an improvement of a result by
Andersson \cite{Andersson}, since it allows critical points and relaxes the volume preserving hypothesis.
\end{abstract}


\section{Introduction}

The interplay between the dynamics on the homology group and properties of dynamical
systems have attracted recently a lot of attention. One of the most well known problems in this direction is
the Entropy conjecture of Shub (see \cite{Shub}). In a sense, one tries to obtain some dynamical properties
(which are of asymptotic nature) by the a priori knowledge of how a certain map wraps the manifold in itself.

In this paper we are interested how a dynamical systems could be influenced by the its action
on the homology groups. In particular, we are interested on conditions on the action on the homology group of
a continuous map of the torus that allow to promote a mild recurrence property (being non-wandering) to a stronger one (i.e., transitivity).
This improves a recent result of Andersson (see \cite{Andersson}) by allowing the presence of critical points.

Let us fix some notations. Let $\T^2$ be two-dimensional torus and let $M_2(\Z)$ be the set of all square
matrices with integer entries. A toral endomorphism or, simply, endomorphism is a surjective continuous map $f:\T^2 \to \T^2$.
It is well known that given two endomorphisms $f, g:\T^2 \to \T^2$ then $f$ and $g$ are homotopic if and only if
$f_{\ast}=g_{\ast}:H_1(\T^2)\to H_1(\T^2)$. From this fact, we have that given a continuous map $f:\T^2 \to \T^2$
there is a unique square matrix $L \in M_2(\Z)$ such that the linear endomorphism induced by $L$,
denoted by $L:\T^2 \to \T^2$ as well, is homotopic to $f$. The matrix $L$, we call linear part of $f$.
When $L$ is a hyperbolic matrix\footnote{The matrix has no eigenvalues of modulus one.}, it called hyperbolic linear part. 

Let $f:\T^2 \to \T^2$ be an endomorphism with linear part $L \in M_2(\Z)$.
We define the {\it{topological degree of $f$}} as the determinant of $L$.

The following question naturally arises:
\bigskip

{\bf{Question 1:}} Under which conditions an endomorphism with hyperbolic linear part is transitive?\\

Recently, Andersson (in \cite{Andersson}) showed that volume preserving non-invertible covering maps of the torus
with hyperbolic linear part is transitive.

It is interesting to observe that the hyperbolicity property is on the homology group, and this action
influences the dynamics. Note that, when the action is no hyperbolic this result can not true. For instance,
$f:\T^2 \to \T^2, \, f(x,y)=(x,3y),$ is a volume preserving non-invertible covering map which preserves vertical stripes.

In order to study dynamical systems in the $C^0$-topology, it is interesting to consider continuous maps
with critical points\footnote{The points which locally the map is not a homeomorphism},
because the set of all the covering maps (endomorphisms without critical points) is not neither dense, nor open set.
Then, questions naturally appear about the critical set. For instance:

\bigskip
{\bf{Question 2:}} Can the result be extended to allow critical points?\\

Another questions that can be done is the following:

\bigskip
{\bf{Question 3:}} Can the volume preserving condition be relaxed?\\

In this direction, we are interested to give some answer about questions 2 and 3. We show that it is possible to obtain an analogous
result changing the volume preserving property given by a milder topological property even in the case where there are
critical points. Notice that one can create sinks for maps of $\T^2$ in any homotopy class, so at least some sort of a priori
recurrence is necessary to obtain such result.

In order to state the main result of this work, let us introduce some notations before.

A point $p \in \T^2$ is a {\it{non-wandering point for $f$}} if for every neighborhood
$B_p$ of $p$ in $\T^2$ there exists an integer $n \geq 1$ such that $f^n(B_p)\cap B_p$ is nonempty. The set $\Omega(f)$ of all
non-wandering points is called non-wandering set. Clearly $\Omega(f)$ is closed and $f$-forward invariant. We call an endomorphism
$f:\T^2 \to \T^2$ by non-wandering endomorphism if $\Omega(f)=\T^2$. Recalling, a point $p$ belonging to $\T^2$ is said
to be a {\it{critical point}} for $f$ if for every neighborhood $B_p$ of $p$ in $\T^2$, we have that $f:B_x \to f(B_x)$ is not a homeomorphism.
We will denote by $S_f$ the set of all the critical points. Clearly $S_f$ is a closed set in $\T^2$. A critical point $p$ is called
{\it{generic critical point}} if for any neighborhood $B$ of $p$ in $\T^2$, $f(B)\backslash \{f(p)\}$ is a connected set.
When all critical points are generics, $S_f$ we will be called generic critical set. It is easy to see that the fold and cusp
critical points are {\emph{generic critical points}}, this justifies the name since by H. Whitney (see \cite{Whitney}) the maps whose
critical points are folds and cusps are generic in the $C^{\infty}$-topology.

\medskip
In this paper, we will prove the following result:
\medskip

\begin{mainthm}\label{thm-1}
Let $f:\T^2 \to \T^2$ be a non-wandering endomorphism with topological degree at least two and generic critical set.
If $f$ is not transitive, then its linear part has a real eigenvalue of modulus one.
\end{mainthm}

It is not known whether the hypothesis of generic critical set is a necessary condition.
It is utilized as a technical hypothesis.

The paper is organized as follows. In section \ref{section-2}, we present a rephrased of the main theorem and some corollaries.
After, in section \ref{section-3}, we give a sketch of the proof of the main theorem. In sections \ref{section-4} and \ref{section-5},
we prove some results that will be used in the proof of the main theorem.

\section{The results}\label{section-2}

In this section we will give the main theorem and its consequences. The a main theorem can be rephrased as follows:

\begin{mainthm}\label{thm 1}
Let $f:\T^2 \to \T^2$ be a non-wandering endomorphism with topological degree at least two and generic critical set.
If linear part of $f$ is hyperbolic, then $f$ is transitive.
\end{mainthm}

Before starting of the proof, we give some immediate consequences of the main theorem:

\begin{cor}\label{cor 1}
Let $f:\T^2 \to \T^2$ be a volume preserving endomorphism with topological degree at least two and generic critical set.
If $f$ is not transitive, then its linear part has a real eigenvalue of modulus one. 
\end{cor}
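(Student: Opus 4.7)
The plan is to deduce the corollary directly from the Main Theorem by showing that a volume preserving endomorphism of $\T^2$ is automatically non-wandering. Once that reduction is in place, the hypotheses of the Main Theorem hold verbatim (topological degree $\geq 2$, generic critical set, non-wandering), and its conclusion yields exactly what is claimed.

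To establish the reduction I would invoke Poincar\'e recurrence for measure preserving transformations, which applies to endomorphisms and not only to homeomorphisms. The Lebesgue measure on $\T^2$ is a finite Borel measure preserved by $f$, so for every Borel set $U\subset \T^2$ with positive measure, almost every point of $U$ returns to $U$ infinitely often under iteration of $f$. In particular, given any point $p\in\T^2$ and any open neighborhood $B_p$ of $p$, the set $B_p$ has strictly positive Lebesgue measure, hence there exists an integer $n\geq 1$ with $f^n(B_p)\cap B_p\neq\emptyset$. This is the definition of $p$ being non-wandering, and since $p$ was arbitrary we obtain $\Omega(f)=\T^2$.

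With $f$ now known to be a non-wandering endomorphism of topological degree at least two with generic critical set, I would apply the Main Theorem in the contrapositive form stated there: if $f$ is not transitive, then its linear part has a real eigenvalue of modulus one. That is precisely the conclusion of the corollary.

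There is essentially no genuine obstacle in this argument, since the main work has already been done in the Main Theorem; the only point deserving a line of justification is that Poincar\'e recurrence applies to noninvertible measure preserving maps, which follows from the usual proof (the sets $B_p\cap f^{-n}(B_p)$, $n\geq 0$, cannot all be disjoint in a finite measure space).
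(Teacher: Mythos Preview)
Your argument is correct and matches the paper's own proof, which consists of the single observation that volume preservation forces $\Omega(f)=\T^2$, after which the Main Theorem applies directly. Your invocation of Poincar\'e recurrence is the standard justification for that observation and is entirely appropriate.
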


The proof follows from of the fact that volume preserving implies that the non-wandering set is the whole torus.
Furthermore, in the case that the critical set is empty. That is, when the endomorphism is a covering maps.
We also have the following consequence:

\begin{cor}\label{cor 1.1}
Let $f:\T^2 \to \T^2$ be a non-wandering endomorphism with topological degree at least two and without critical points (i.e., $S_f=\emptyset$).
If $f$ is not transitive, then its linear part has a real eigenvalue of modulus one.
\end{cor}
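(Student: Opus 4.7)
The corollary is an immediate specialization of the Main Theorem to the case $S_f=\emptyset$, so the plan is simply to verify that every hypothesis of the Main Theorem is inherited. Among the three hypotheses — non-wandering, topological degree at least two, and generic critical set — only the last one requires comment.

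By the definition given in the introduction, $S_f$ is declared to be a \emph{generic critical set} when every critical point is generic, that is, when for every $p\in S_f$ and every neighborhood $B$ of $p$ the set $f(B)\setminus\{f(p)\}$ is connected. When $S_f=\emptyset$ this universal statement quantifies over the empty set, so it is vacuously true. Hence any endomorphism $f:\T^2\to\T^2$ with no critical points automatically has a generic critical set in the sense used throughout the paper.

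With this observation in hand the Main Theorem applies to $f$ verbatim and delivers exactly the stated conclusion: if $f$ fails to be transitive then its linear part $L\in M_2(\Z)$ has a real eigenvalue of modulus one. There is no real obstacle here; the entire content of the corollary lives inside the Main Theorem, and the role of the statement is merely to make explicit that the covering-map subcase — which generalises Andersson's theorem from the volume preserving setting to the non-wandering setting — is already contained in the main result. (If one wished, the ``real'' in the conclusion could alternatively be deduced from the fact that a complex-conjugate pair of unit-modulus eigenvalues of an integer matrix would force $\det L=1$, contradicting the topological degree being at least two; but this deduction is already used inside the Main Theorem and need not be reproduced here.)
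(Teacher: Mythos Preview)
Your proposal is correct and matches the paper's approach: the corollary is stated there as an immediate consequence of the Main Theorem in the special case $S_f=\emptyset$, with no further argument given. Your only added content is the explicit observation that the generic-critical-set hypothesis is vacuously satisfied, which is exactly the missing detail a reader would supply.
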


\section{Sketch of the proof of the main theorem}\label{section-3}

We prove in the section \ref{section-4} that if a non-wandering endomorphism is not transitive, then we can divide the torus in two complementary
open sets which are $f$-invariant. After, in the section \ref{section-5}, we use the generic critical points to prove that those open sets are
essential (see Definition \ref{def-1}) and their fundamental groups have just one generator. 
Then, in section \ref{section-6}, we prove that the action of $f$ on the fundamental group of the torus
has integer eigenvalues and that at least one has modulus one. 

\section{Existence of invariant sets}\label{section-4}

An open subset $U\subset \T^2$ is called {\it{regular}} if $U=\rm{int}(\overline{U})$ where $\overline{U}$ is the closure of $U$ in $\T^2$
that sometimes we will also be denoted like ${\bf{cl}}(U)$.

Given a subset $A\subset \T^2$ we write $A^{\bot}:=\T^2 \backslash \overline{A}$.
Note that for any open set $U\subseteq \T^2$, we have $U^{\bot}=\rm{int}(\overline{U^{\bot}})$, i. e., $U^{\bot}$ is regular.

We say that a subset $A\subseteq \T^2$ is {\it{f-backward invariant}} if $f^{-1}(A)\subseteq A$ and {\it{f-forward invariant}}
if $f(A)\subseteq A$. We say that $A\subseteq \T^2$ is {\it{f-invariant}}
when it is $f$-backward and $f$-forward invariant set.

An endomorphism $f:\T^2 \to \T^2$ is transitive if for every open set $U$ in $\T^2$
we have that $\cup_{n \geq 0}f^{-n}(U)$ is dense in $\T^2$.

The lemma below gives a topological obstruction for a non-wandering endomorphism to be transitive.

\begin{lemma}\label{lema 1}
Let $f:\T^2 \to \T^2$ be a non-wandering endomorphism. Then, the following are equivalent:

\begin{enumerate}

\item[$(a)$] $f$ is not transitive;

\item[$(b)$] there exist $U,V \subseteq \T^2$ disjoint $f$-backward invariant regular open sets. Furthermore, 
$\overline{U}$ and $\overline{V}$ are $f$-forward invariant.

\end{enumerate}

\end{lemma}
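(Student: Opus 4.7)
The plan is to prove the two implications separately. The direction $(b)\Rightarrow(a)$ is immediate: given nonempty disjoint $f$-backward invariant open sets $U,V$, the set $\bigcup_{n\ge 0}f^{-n}(V)$ lies inside $V$ and therefore misses $U$, so it fails to be dense in $\T^2$; hence $f$ is not transitive.

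For the reverse direction I would start from a witness of non-transitivity. Pick an open $W\subseteq\T^2$ whose backward orbit
\[
V_0 \,:=\, \bigcup_{n\ge 0} f^{-n}(W)
\]
is not dense. By construction $V_0$ is open and $f$-backward invariant. The candidates for the two regularized sets are
\[
U \,:=\, \T^2\setminus\overline{V_0}, \qquad V \,:=\, \mathrm{int}(\overline{V_0}).
\]
These are disjoint regular open sets, both nonempty ($V_0$ is not dense, and $V_0\subseteq V$). A purely set-theoretic calculation (using the identities $\overline{V}=\overline{V_0}$ and $V=\T^2\setminus\overline{U}$) reduces the four invariance conditions in $(b)$ to the single assertion that both $\overline{V_0}$ and $\overline{U}$ are $f$-forward invariant.

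The heart of the proof is this pair of forward-invariance claims, and this is where the non-wandering hypothesis enters. For $\overline{V_0}$, the only part of $f(V_0)$ not obviously in $V_0$ is $f(W)$, since for $n\ge 1$ one has $f(f^{-n}(W))\subseteq f^{-(n-1)}(W)\subseteq V_0$. Given $x\in W$ and a small open neighborhood $B\subseteq W$ of $x$, non-wandering supplies some $n\ge 1$ and $x'\in B$ with $f^n(x')\in B\subseteq W$; then $x'\in f^{-n}(W)$ with $n\ge 1$, so $f(x')\in f^{-(n-1)}(W)\subseteq V_0$, and by continuity of $f$ the point $f(x')$ approaches $f(x)$ as $B$ shrinks. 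This yields $f(V_0)\subseteq\overline{V_0}$ and hence $f(\overline{V_0})\subseteq\overline{V_0}$. The forward invariance of $\overline{U}$ follows from an identical argument, with $U$ in the role of $W$: since $U$ is already backward invariant, its own backward orbit equals $U$, so non-wandering applied to a neighborhood $B\subseteq U$ of any $x\in U$ produces $x'\in B$ with $f^n(x')\in U$ for some $n\ge 1$, whence $f(x')\in U$ by backward invariance, and $f(x')$ is arbitrarily close to $f(x)$.

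The main obstacle is precisely this pair of forward-invariance statements. Without the non-wandering hypothesis $f$ could push $W$ (or the open set $U$) out of the closure of the backward orbit, and the construction would collapse; non-wandering is exactly what lets one approximate $f(x)$ from within the basin. Once forward invariance of $\overline{V_0}$ and $\overline{U}$ is in hand, the remaining verifications in $(b)$ reduce to elementary regularity manipulations.
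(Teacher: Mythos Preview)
Your proof is correct but takes a genuinely different route from the paper's. The paper starts from \emph{two} open sets $U_0',V_0'$ witnessing non-transitivity, forms both backward orbits $U'=\bigcup_{n\ge 0}f^{-n}(U_0')$ and $V'=\bigcup_{n\ge 0}f^{-n}(V_0')$, and must then prove these are disjoint (its Claim~1, where non-wandering handles the case $n<m$); it sets $U=\mathrm{int}(\overline{U'})$, $V=\mathrm{int}(\overline{V'})$ and checks the invariance properties via two further claims. You instead take a single $W$ with non-dense backward orbit $V_0$, put $V=\mathrm{int}(\overline{V_0})$ and $U=\T^2\setminus\overline{V_0}$, so that disjointness is automatic and the complementarity $U=\T^2\setminus\overline V$, $V=\T^2\setminus\overline U$ collapses all four invariance conditions to the forward invariance of $\overline{V_0}$ and of $\overline U$, each established by the same non-wandering approximation trick. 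Your argument is cleaner for the lemma as stated. One caveat for what follows: in the paper's symmetric construction \emph{both} sets are regularized backward orbits of a small seed, and this specific form is invoked later (in the proof of Corollary~\ref{lema 5}) to pick a connected component $U_0$ of $U$ with $\overline U=\overline{\bigcup_{n\ge 0}f^{-n}(U_0)}$; your $V$ has this feature but your $U=\T^2\setminus\overline{V_0}$ does not obviously, so keep that asymmetry in mind downstream.
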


\begin{proof}
$(b)\Rightarrow (a)$: It is clear. Since $f^{-n}(U)\cap V=\emptyset$ for every $n\geq 0$.\bigskip

$(a)\Rightarrow (b)$: Since $f$ is not transitive, there exist $U_0'$ and $V_0'$ open sets such that
$$f^{-n}(U_0')\cap V_0'=\emptyset \ \ \text{for every} \ \ n\geq 0.$$

\medskip
{{\it{Claim}} 1:} $U'=\cup_{n\geq 0}f^{-n}(U_0')$ and $V'=\cup_{n\geq 0}f^{-n}(V_0')$
are disjoint $f$-backward invariant open sets.
\bigskip

Indeed, it is clear that $U'$ and $V'$ are $f$-backward invariant open sets. Then, we must show only that
$U'$ and $V'$ are disjoint sets. For this, suppose by contradiction that $U'\cap V'\neq \emptyset$.
That is, suppose that there exist $n, m \geq 0$ such that $$f^{-n}(U_0')\cap f^{-m}(V_0')\not=\emptyset.$$

Let $x \in f^{-n}(U_0')\cap f^{-m}(V_0')$. Then $f^n(x) \in U_0'$ and $f^m(x)\in V_0'$. 

Then, we have the following possibilities:

\begin{itemize}
\item $n \geq m:\ \ f^{n-m}(f^m(x)) \in U_0' \Rightarrow f^{-n+m}(U_0')\cap V_0'\not=\emptyset.$\\
\item $n < m:$ By continuity of $f$, we can take a neighborhood $B\subseteq U_0'$ of $f^n(x)$ such that
$f^{m-n}(B) \subseteq V_0'$. Since $\Omega(f)=\T^2$, we can take $B$ and $k\geq m-n$ such that
$f^k(B)\cap B\neq \emptyset$. Hence, $f^{-(k-m+n)}(U_0')\cap V_0' \neq \emptyset.$
\end{itemize}

In both cases, we have a contradiction.\bigskip

The following statement will be used to choose the sets $U$ and $V$.

\medskip
{{\it{Claim}} 2:} $f^{-1}(U')$ is dense in $U'$. The same holds for $V'$.
\bigskip

Indeed, given any open subset $B$ of $\T^2$ contained in $U'$, since $f$ is a non-wandering endomorphism there exists
$n\geq 1$ such that $f^n(B)\cap B\not= \emptyset$. Then, $f^{-n}(B)\cap B\not=\emptyset$, in particular,
$f^{-n}(U')\cap B\not=\emptyset$. Therefore $f^{-1}(U')\cap B\not=\emptyset$ , since $f^{-m}(U')\subseteq f^{-1}(U')$ for
all $m\geq 1$. In particular, $\overline{U'}=\overline{f^{-1}(U')}$. This proves the claim 2.\\


Finally, we define

\begin{equation}\label{def-U}
 U={\rm{int}}(\overline{U'}) \ \ {\rm{and}} \ \ V={\rm{int}}(\overline{V'}).
\end{equation}

\medskip
{{\it{Claim}} 3:} $U$ and $V$ satisfy:
\medskip
\begin{enumerate}
\item[($i$)] $U$ and $V$ are regular;
\item[($ii$)] $f^{-1}(U)\subseteq U$ and $f^{-1}(\overline{U})\supseteq \overline{U}$, the same holds for $V$.\\
\end{enumerate}

Item ($i$) follows from the fact that $\overline{U}=\overline{U'}$.
\medskip

To prove item ($ii$), it is sufficient to show that 
$${\rm{int}}(f^{-1}(\overline{U'}))=U.$$
Because $f^{-1}(U)\subseteq {\rm{int}}(f^{-1}(\overline{U'}))$, since
$f^{-1}(U)=f^{-1}({\rm{int}}(\overline{U'}))\subseteq f^{-1}(\overline{U'})$. Hence, we have $f^{-1}(U)\subseteq U$
and $\overline{U}=\overline{f^{-1}(U)}\subseteq f^{-1}(\overline{U})$, by Claim 2.

Now, we will prove that 

\begin{equation*}\label{eq.1}\tag{$\ast$}
{\rm{int}}(f^{-1}(\overline{U'}))=U.
\end{equation*}

Note that $U={\rm{int}}(\overline{U'})\subseteq {\rm{int}}(f^{-1}(\overline{U'}))$, since
$\overline{U'} = \overline{f^{-1}(U')} \subseteq f^{-1}(\overline{U'})$. Hence, we have to show only that
\begin{equation*}\label{eq.2}\tag{$\ast\ast$}
{\rm{int}}(f^{-1}(\overline{U'}))\subseteq U.
\end{equation*}

To prove this, let $B$ be an open set contained in $f^{-1}(\overline{U'})$. Suppose that $B$ is not contained in $\overline{U'}$.
Then, we may take an open subset $B'$ of $\T^2$ contained in $B$ such that $B'\cap \overline{U'}=\emptyset$. Since $\Omega(f)=\T^2$
and $f^n(B')\subseteq f^n(\overline{U'}) \subseteq \overline{U'}$ for every $n \geq 1$, we have a contradiction because
$f^n(B')\cap B'\neq \emptyset$ for $n\geq 1$. Therefore, $B$ is contained in $\overline{U'}$.
Thus, we conclude (\ref{eq.2}), and so, (\ref{eq.1}). This proves the Claim 3.
\end{proof}

Henceforth, we assume that $f$ is a non-wandering endomorphism with topological degree at least two
and $U,V$ are the sets given by proof of the item ($b$) of the lemma above.

\begin{rmk}\label{rmk 1}
 Note that as $f^{-1}(U)\subseteq U$ and $f^{-1}(\overline{U})\supset \overline{U}$, one gets $f(\overline{U})= \overline{U}$
 and, consequently, $f(\partial \overline{U})=\partial \overline{U}$. Moreover, since ${\rm{int}}(\overline{U})=U$, 
 $\partial U= \partial \overline{U}$, one has \linebreak $\partial U_i \subseteq \partial U$
 for every $U_i$ connected component of $U$. Thus, given $U_i$ a connected component of $U$, we have
 $f(\partial U_i)\subseteq \partial U$.
\end{rmk}
%
%
%
%
%
%
%
%

The following proposition shows that the points belonging to $U$ whose images are in the boundary of $U$ are critical points.  

\begin{prop}
Let $p \in U$. If $f(p)\in \partial U$ then $p \in S_f$.
\end{prop}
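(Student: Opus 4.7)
The plan is to argue by contradiction. Suppose that $p \in U$ satisfies $f(p) \in \partial U$ but $p \notin S_f$. Then by the definition of critical point, there exists an open neighborhood $B_p$ of $p$ in $\T^2$ such that $f|_{B_p}\colon B_p \to f(B_p)$ is a homeomorphism. Since $U$ is open and $p \in U$, I can shrink $B_p$ so that $B_p \subseteq U$, while keeping the restriction of $f$ a homeomorphism onto its image.

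The key topological input I would use is invariance of domain: an injective continuous map from an open subset of the $2$-manifold $\T^2$ into $\T^2$ sends open sets to open sets. Applying this to $f|_{B_p}$ yields that $f(B_p)$ is open in $\T^2$.

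Now I would combine this with Remark \ref{rmk 1}, which gives $f(\overline{U})=\overline{U}$. Since $B_p \subseteq U \subseteq \overline{U}$, we obtain
\[
f(B_p) \subseteq f(\overline{U}) = \overline{U}.
\]
Being an open subset of $\overline{U}$, it must sit inside the interior; but $U$ is regular, so $\mathrm{int}(\overline{U}) = U$, and hence $f(B_p) \subseteq U$. In particular $f(p) \in U$, which contradicts $f(p) \in \partial U$ (as $\partial U \cap U = \emptyset$ because $U$ is open). Therefore $p \in S_f$.

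There is no real obstacle here; the argument is a short topological contradiction. The only subtle point worth flagging is the use of invariance of domain to promote ``local homeomorphism onto its image'' to ``image is open in $\T^2$''. Everything else is a direct combination of regularity of $U$ with the invariance property $f(\overline{U})=\overline{U}$ established earlier.
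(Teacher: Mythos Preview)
Your argument is correct and follows the same route as the paper's proof: assume $p\notin S_f$, take a neighborhood $B\subseteq U$ on which $f$ is a homeomorphism, conclude $f(B)$ is an open subset of $\overline{U}$, and use regularity of $U$ to force $f(B)\subseteq \operatorname{int}(\overline{U})=U$, contradicting $f(p)\in\partial U$. You are simply more explicit than the paper in naming invariance of domain and in citing Remark~\ref{rmk 1} for $f(\overline{U})=\overline{U}$, which the paper leaves implicit.
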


\begin{proof}
Suppose that there exist a neighborhood $B$ of $p$ contained in $U$ such that $f:B\to f(B)$ is a homeomorphism and $f(B)$ is an open set
contained in $\overline{U}$. In particular, $f(B)\subseteq {\rm{int}}(\overline{U})=U$.
\end{proof}

The following lemma shows that the image of a component of $U$ which intersect two other components of $U$ intersects the boundary of $U$
in a unique point.

\begin{lemma}\label{lema 2}
Given $U_0,\,U_1$ and $U_2$ connected components of $U$ such that $U_1$ and $U_2$ are disjoint and
let $U_{01}$ and $U_{02}$ be connected components of $f^{-1}(U_1),\, f^{-1}(U_2)$ contained in $U_0$, respectively.
If $C:=\partial U_{01}\cap \partial U_{02}$ is a non-empty set contained in $U_0$, then $f(C)$ is a point.
\end{lemma}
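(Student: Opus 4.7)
The first observation is that $f(C)$ must lie on $\partial U$. Fix any $p\in C$. Since $p\in\partial U_{01}$, every neighborhood of $p$ meets $U_{01}$, and as $f(U_{01})\subseteq U_1$ the continuity of $f$ forces $f(p)\in\overline{U_1}$; the symmetric argument yields $f(p)\in\overline{U_2}$. The components $U_1,U_2$ are disjoint open subsets of $\T^2$, so $\overline{U_1}\cap\overline{U_2}=\partial U_1\cap\partial U_2\subseteq\partial U$. Hence $f(C)\subseteq\partial U_1\cap\partial U_2\subseteq\partial U$, and since $C\subseteq U_0\subseteq U$ the previous proposition immediately gives $C\subseteq S_f$; every point of $C$ is thus a generic critical point.

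The heart of the argument is a local rigidity statement. Fix $p\in C$ and a small ball $B\subseteq U_0$ around $p$. The generic critical hypothesis says $f(B)\setminus\{f(p)\}$ is connected, and in fold or cusp normal form the critical set $S_f$ is a smooth arc $\gamma$ through $p$. I would classify the possible configurations of $U_{01},U_{02}$ near $p$ relative to $\gamma$: both must touch $p$ on their boundaries, but the constraint $C\subseteq U_0$ combined with the previous proposition (which forbids a non-critical point of $U$ from having image in $\partial U$) rules out every configuration except the one in which $U_{01}, U_{02}$ occupy opposite local sheets of the fold/cusp picture and meet only at $p$. Otherwise, one would produce a point $r\in\gamma$ near $p$ lying in $U_0$ with $f(r)\in\partial U_1\cap\partial U_2$ but $r\notin S_f$, which the proposition forbids; or a point of $C$ that escapes $U$, violating $C\subseteq U_0$. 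This forces $C\cap B=\{p\}$, so $C$ is discrete; being closed in the compact torus, $C$ is finite.

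To pass from local to global constancy of $f|_C$, I would use path-connectedness of $U_{01}$ and $U_{02}$: for any two points $p,p'\in C$, one can connect them by paths in $\overline{U_{01}}$ and $\overline{U_{02}}$, obtaining a loop whose image in the target lies in $\overline{U_1}\cup\overline{U_2}$ and touches $\partial U_1\cap\partial U_2$ only at $f(p),f(p')$. A careful application of the local rigidity around those hits, together with the connectedness of $f(B)\setminus\{f(p)\}$, should force $f(p)=f(p')$. The main obstacle is precisely the local case analysis, especially at cusp points, where one must track how each local sheet of the cusp can belong to the global components $U_{01},U_{02}$ subject to $C\subseteq U_0$; the cusp case carries more bookkeeping than the fold and is the technical heart of the lemma.
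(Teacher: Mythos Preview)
Your proposal diverges from the paper's argument in both strategy and logical placement, and it contains a genuine gap.

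\textbf{Logical placement.} In the paper this lemma is proved \emph{without} the generic critical set hypothesis; that hypothesis is only introduced afterwards. The lemma is then fed into the next corollary to show that any point of $C$ is \emph{not} a generic critical point, and only then does the standing assumption ``$S_f$ generic'' force $C=\emptyset$. Your first paragraph already invokes the generic hypothesis (``every point of $C$ is thus a generic critical point''), so you are not proving the lemma as stated, and you invert the intended logical flow.

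\textbf{The main gap.} You identify ``generic critical point'' with ``fold or cusp'' and then run a normal-form case analysis. The paper's definition is purely topological: $p$ is generic if $f(B)\setminus\{f(p)\}$ is connected for every neighborhood $B$ of $p$. Folds and cusps are only cited as motivating examples; nothing in the hypotheses gives you a smooth arc of critical points or a Whitney normal form. So the entire ``local rigidity'' paragraph has no foundation. There is also an internal inconsistency: you produce a point $r\in\gamma$ (your critical arc) with $r\notin S_f$, which is self-contradictory. Finally, the passage from ``$C$ discrete'' to ``$f$ constant on $C$'' is only a hope (``should force $f(p)=f(p')$''), not an argument.

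\textbf{What the paper does instead.} The proof is pure point-set topology plus uniform continuity, with no smoothness and no use of the generic hypothesis. One first notes $f(C)\subseteq\partial U_1\cap\partial U_2$. The key claim is that for any $y\in\partial U_1\cap\partial U_2$ and all small $\epsilon$, at least one of $B_\epsilon(y)\cap\overline{U_1}$ or $B_\epsilon(y)\cap\overline{U_2}$ has infinitely many connected components; otherwise the (finitely many) local pieces $W^+\subseteq\overline{U_1}$ and $W^-\subseteq\overline{U_2}$ meeting $C'$ would fill a small ball around $y$, putting $y$ in $\mathrm{int}(\overline{U})=U$ and contradicting that $U_1,U_2$ are distinct components of the regular set $U$. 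One then takes $x\in C$ with $f(x)=y$, a short arc $\gamma\subset B_\delta(x)$ from $U_{01}$ to $U_{02}$, and observes that $f\circ\gamma$ is forced to visit infinitely many of those local components inside $B_{\epsilon_0}(y)$, producing points on $f\circ\gamma$ at distance $\ge\epsilon_0$ while their preimages on $\gamma$ are $\delta$-close; this contradicts uniform continuity. That contradiction is what collapses $f(C)$ to a point.
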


\begin{proof}
Consider $C':=f(C)$, without loss of generality, suppose that $C$ is a nontrivial connected set. Then, as
$f(\partial U_i)\subseteq \partial U$, we have $C'\subseteq \partial U_1 \cap \partial U_2$ is a connected set.
\newline
\begin{figure}[!h]\label{finitascomponentes-0}
\centering
\includegraphics[scale=0.8]{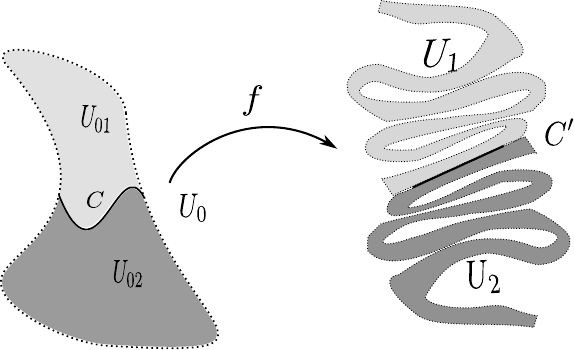}
\caption{Components $U_{01}$ and $U_{02}$ in $U_0$.}
\end{figure}

Given $y \in C'$, denote by $B_{\epsilon}(y)$ a ball in $\T^2$ centered in $y$ and radius $\epsilon$.\\
\newline
{{\it{Claim}} 1:} For every $\epsilon \ll 1$, we have that $B_{\epsilon}(y) \cap \overline{U}_1$ or $B_{\epsilon}(y) \cap \overline{U}_2$ has
infinitely many connected components.\\

Indeed, suppose that for every $\epsilon>0$, $B_{\epsilon}(y)\cap \overline{U}_1$ and $B_{\epsilon}(y)\cap \overline{U}_2$ has
finitely many connected components. Denote by $W^+$ the connected component of $B_{\epsilon}(y)\cap \overline{U}_1$ and
by $W^-$ the connected component of $B_{\epsilon}(y)\cap \overline{U}_2$ which intersect $C'$. Note that, up to subsets of $C'$,
we may suppose that $C'\subseteq B_{\epsilon}(y)$ and that $C'= W^+\cap W^-$.

\begin{figure}[!h]\label{finitascomponentes}
\centering
\includegraphics[scale=0.6]{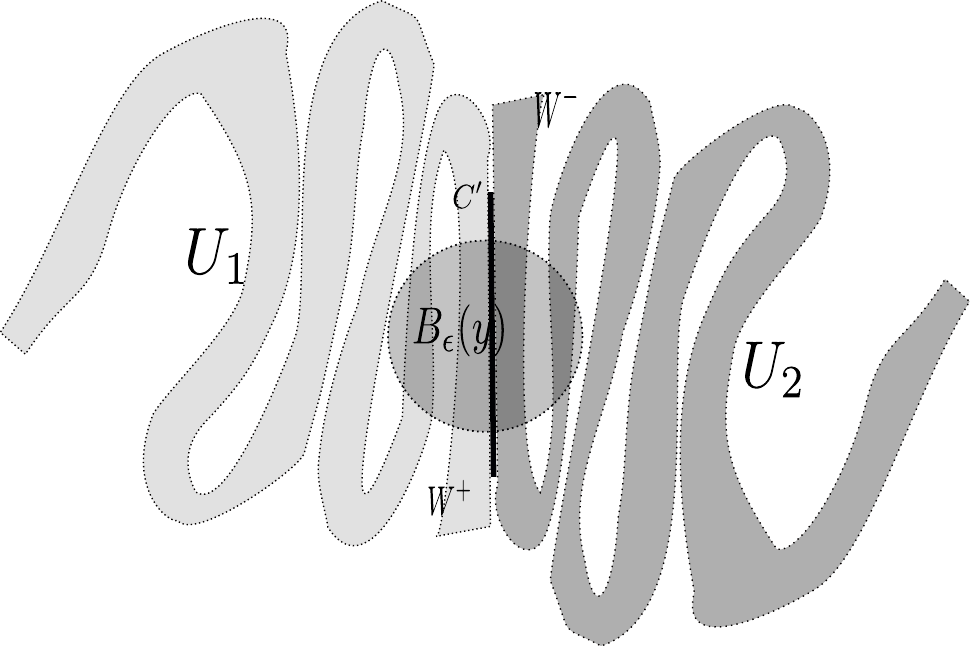}
\caption{connected components.}
\end{figure}

Hence, we can choose $\epsilon_0>0$ such that $W^+\cup W^-$ contain an open set and $B_{\epsilon}(y)$ is contained in $W^+\cup W^-$
for every $0<\epsilon<\epsilon_0$. In particular, $B_{\epsilon}(y)$ is contained in $\overline{U}_1\cup \overline{U}_2$.
Contradicting the fact that $U={\rm{int}}(\overline{U})$ and $U_1, U_2$ are connected components of $U$. This proves of claim 1.

To finish the proof of the lemma, we may suppose, without loss generality, that \linebreak $B_{\epsilon_0}(y)\cap U_1$ has infinitely many connected
components. Then, we know, by continuity of $f$, that for $0<\epsilon<\frac{\epsilon_0}{2}$ there is $\delta >0$ such that
$$d(x,y)<\delta \Rightarrow d(f(x),f(y))<\epsilon, \forall x,y \in \T^2.$$
Now, we consider $x \in C$ such that $y=f(x)$ and a curve $\gamma$ in $B_{\delta}(x)$ that intersect $C$ at $x$ and 
$\gamma(0)\in U_{01},\gamma(1)\in U_{02}$. Then $f(\gamma)$ is a curve such that $f(\gamma)\cap B_{\epsilon_0}(y)$ has infinitely many
components. In particular, there exist $t,s \in [0,1]$ such that $$d(f(\gamma(t)),f(\gamma(s)))\geq \epsilon_0>\epsilon,$$
which is a contradiction, because $f$ is uniformly continuous, the desired result follows.

\begin{figure}[!h]\label{abertos}
\centering
\includegraphics[scale=1.1]{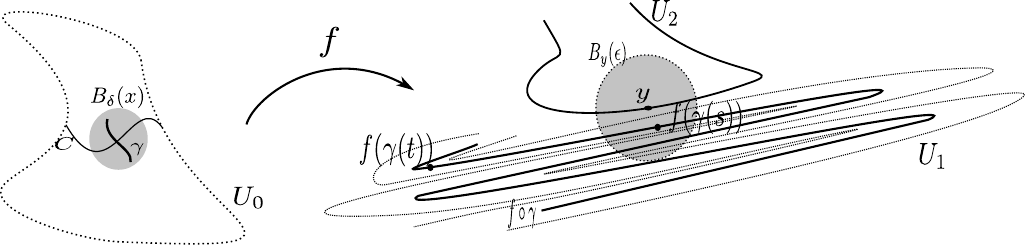}
\caption{Connected components.}
\end{figure}

\end{proof}

The lemma below is important because it shows the existence of critical points that are not generic for $f$.

\begin{cor}\label{cor 3}
Let $U_{01}$ and $U_{02}$ be as in Lemma $\ref{lema 2}$. If $p$ belongs to \linebreak $C=\partial U_{01}\cap \partial U_{02}$ then
$p$ is not a generic critical point.
\end{cor}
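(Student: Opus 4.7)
My plan is to argue by contradiction: assuming $p\in C$ is a generic critical point and setting $q:=f(p)$, I will produce a neighborhood $B$ of $p$ for which $f(B)\setminus\{q\}$ is disconnected. First I would record that Lemma~\ref{lema 2} gives $f(C)=\{q\}$, and that since $C\subseteq U_0$ lies in the $U_0$-boundary of both $U_{01}$ and $U_{02}$, continuity (together with the fact that the boundary in $U_0$ of a component of $f^{-1}(U_i)$ maps into $\partial U_i$) forces $q\in\partial U_1\cap\partial U_2$.

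Next I would take a connected open neighborhood $B\subseteq U_0$ of $p$, to be shrunk later. Because $p\in\partial U_{01}\cap\partial U_{02}$ and $U_{01},U_{02}$ are open, $B\cap U_{01}$ and $B\cap U_{02}$ are nonempty and disjoint, so their images yield nonempty disjoint open subsets
\[
A_1:=f(B)\cap U_1,\qquad A_2:=f(B)\cap U_2
\]
of $f(B)\setminus\{q\}$. Moreover $f(B)\subseteq\overline{U}$ by Remark~\ref{rmk 1}.

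The heart of the proof will be to show that, for $B$ small enough, $f(B)\setminus\{q\}\subseteq U_1\cup U_2$; once this is in hand, $f(B)\setminus\{q\}=A_1\sqcup A_2$ is a disjoint union of two nonempty relatively open sets, hence disconnected, contradicting the generic hypothesis on $p$. A hypothetical ``bridge'' point $z\in f(B)\setminus(U_1\cup U_2\cup\{q\})$ must live in $(\partial U\setminus\{q\})\cup\bigcup_{j\neq 1,2}U_j$: points $z\in\partial U\setminus\{q\}$ pull back via $f$ to critical points of $f$ inside $B$ (by the Proposition preceding Lemma~\ref{lema 2}), while points $z\in U_j$ with $j\neq 1,2$ would force $q\in\overline{U_j}$ and, via Lemma~\ref{lema 2} applied to an appropriate component of $f^{-1}(U_j)\cap U_0$, would produce a further set $C'\ni p$ on which $f$ is constant with a different value. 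I would combine these observations with a uniform-continuity argument in the spirit of Claim~1 in the proof of Lemma~\ref{lema 2} to rule out bridge points for $B$ sufficiently small.

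The main obstacle is precisely this no-bridge step; the other ingredients---the existence of the two pieces $A_1, A_2$, the inclusion $f(B)\subseteq\overline{U}$, and the final disconnection---are routine once the containment $f(B)\setminus\{q\}\subseteq U_1\cup U_2$ is secured. Controlling $f(B)$ near $q$ is where the generic-critical hypothesis of the main theorem is decisively used.
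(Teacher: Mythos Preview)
Your overall architecture is the same as the paper's: show that for a suitable neighbourhood $B\subset U_0$ of $p$ the set $f(B)\setminus\{q\}$ splits into a piece in $U_1$ and a piece in $U_2$, which contradicts genericity. You have correctly isolated the only nontrivial step, the ``no-bridge'' containment $f(B)\setminus\{q\}\subseteq U_1\cup U_2$, but you have not carried it out; the ideas you list (pulling bridge points in $\partial U$ back to critical points, invoking Lemma~\ref{lema 2} for a third component $U_j$, a vague uniform-continuity argument) do not yet combine into an argument.

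The paper closes this gap by working on the \emph{domain} side rather than the image side. First it shows, using only that $f$ is non-wandering and $f(\partial U)=\partial U$, that $f^{-1}(q)$ has empty interior; hence the open set $B\setminus f^{-1}(q)$ is dense in $B$. The point is that $\bigcup_i U_{0i}$ (the union over all components of $f^{-1}(U_k)\cap U_0$, for all $k$) is dense in $U_0$, and the $U_0$-boundary of each $U_{0i}$ lies in $f^{-1}(\partial U)$; by Lemma~\ref{lema 2} these boundaries are collapsed by $f$ to the single point $q$. One then chooses $B$ so that the components of $B\setminus f^{-1}(q)$ are trapped in the various $U_{0i}$, with at least one component in $U_{01}$ and one in $U_{02}$. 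Since $f(B)\setminus\{q\}=f\bigl(B\setminus f^{-1}(q)\bigr)$ set-theoretically, this image is contained in a disjoint union of (open) sets $U_k$, and it meets both $U_1$ and $U_2$; disconnection follows.

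So the missing idea in your plan is precisely this passage to $B\setminus f^{-1}(q)$ together with the observation that $f^{-1}(q)$ has empty interior; once you control the components of $B$ \emph{before} applying $f$, the bridge problem disappears, and you do not need the case analysis on hypothetical bridge points $z$ that you sketched.
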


\begin{proof}
By item ($b$) of the Lemma \ref{lema 1} and by Remark \ref{rmk 1}, $U$ and $V$ are disjoint $f$-backward invariant open sets satisfying:
\begin{itemize}
 \item $\T^2=\overline{U} \cup V$;
 \item $f(\partial U)=\partial U$ and $f(\partial V)=\partial V$.  
\end{itemize}
Then, $f^{-1}(f(p))$ has empty interior. Otherwise, $f({\rm{int}}(f^{-1}(f(p)))=f(p) \in \partial U$ that is
$f$-forward invariant, contradicting the fact that $f$ is a non-wandering endomorphism. 
Now, we can choose a neighborhood $B$ of $p$ contained in $U_0$ such that $B\backslash \{f^{-1}(f(p))\}$ has at least two
connected components which are contained in $U_{01}$ and $U_{02}$.
By Lemma \ref{lema 2}, it follows that the boundary component of $U_{0i}$ contained in $U_0$ has as image a point, where $U_{0i}$
is a component connected of $f^{-1}(U_i)$ contained in $U_0$. Then, as $\overline{U}_0=\{\overline{U}_{0i}:U_{0i} \subset U_0\}$, we have that
$$f(B)\backslash \{f(p)\}= f(B\backslash \{f^{-1}(f(p))\}) \subset \{f(B\cap\overline{U}_{0i}):U_{0i}\subset U_0\}.$$
In particular,  $f(B\cap U_{01})\subseteq U_1$ and $f(B\cap U_{02})\subseteq U_2$.

\begin{figure}[!h]\label{discpoint}
\centering
\includegraphics[scale=0.8]{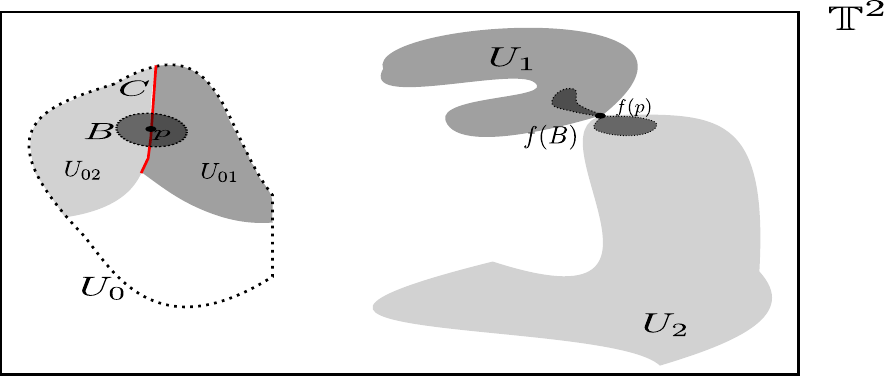}
\caption{$p$ is not generic critical point.}
\end{figure}

Therefore, one has that $p$ is not a generic critical point.

\end{proof}

In the following lemma we will show that $f$ satisfies: for each $U_i$ connected component of $U$ there exists a unique connected
component $U_j$ of $U$ such that $\overline{U}_j=f(\overline{U}_i)$. Hence, we will say that $f$ {\it{preserves the connected
components of}} $U$.

Now, we suppose, in addition to the hypothesis of $f$ be non-wandering endomorphism of degree at least two,
that the critical points of $f$ are generics.

\begin{lemma}\label{lema 3}
$f$ preserves the connected components of $U$. Moreover, every connected component $U_i$ of $U$ is
periodic $(i.e.,\exists n_i\geq 1$ such that $f^{n_i}(\overline{U}_i)=\overline{U}_i$ and $f^{-n_i}(U_i) \subseteq U_i).$
\end{lemma}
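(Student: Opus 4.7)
The plan is to prove the two assertions separately. I would fix a connected component $U_0$ of $U$ and set $C_0 := U_0 \cap f^{-1}(\partial U)$, which by the proposition preceding Lemma \ref{lema 2} consists of critical points. The first observation is that $C_0$ has empty interior in $U_0$: a nonempty open $B \subseteq C_0$ would satisfy $f^n(B) \subseteq \partial U$ for every $n \geq 1$ by the forward invariance $f(\partial U) = \partial U$ of Remark \ref{rmk 1}, forcing $f^n(B) \cap B = \emptyset$ since $B \subseteq U$ is disjoint from $\partial U$, and contradicting $\Omega(f) = \T^2$. Consequently $V_0 := U_0 \setminus C_0$ is dense and open in $U_0$, and using $f^{-1}(U) \subseteq U$ it decomposes as a disjoint union $V_0 = \bigsqcup_i V_i^0$ with $V_i^0 := U_0 \cap f^{-1}(U_i)$, the index running over components of $U$.

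The central step is to show that exactly one $V_i^0$ is nonempty. I would assume for contradiction that $V_{i_1}^0$ and $V_{i_2}^0$ are both nonempty with $i_1 \neq i_2$. Since $V_0$ is dense in $U_0$, every point of $C_0$ lies in some $\overline{V_i^0}$, so $U_0 = \bigcup_i \overline{V_i^0}$; connectedness of $U_0$ then forces the closed set $\overline{V_{i_1}^0}$ to meet $\bigcup_{i \neq i_1} \overline{V_i^0}$ at a point $p$, and disjointness of the $V_i^0$'s places $p$ in $C_0$. After relabeling, $p \in \overline{V_{i_1}^0} \cap \overline{V_{i_2}^0}$; from sequences converging to $p$ one selects components $U_{01}$ of $V_{i_1}^0$ and $U_{02}$ of $V_{i_2}^0$, each automatically a connected component of $f^{-1}(U_{i_k})$ contained in $U_0$, with $p \in \partial U_{01} \cap \partial U_{02} \subseteq U_0$. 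Corollary \ref{cor 3} then asserts that $p$ is not a generic critical point, contradicting the hypothesis on $S_f$. This gives a unique $j_0$ with $V_{j_0}^0 \neq \emptyset$, and combined with $C_0 \subseteq \overline{V_{j_0}^0}$ it yields $f(\overline{U_0}) \subseteq \overline{U_{j_0}}$. Equality $f(\overline{U_0}) = \overline{U_{j_0}}$ would then follow from a sheet-count argument: $\#(f^{-1}(q) \cap U_0)$ is locally constant and strictly positive on the connected set of regular values in $U_{j_0}$, so $f(U_0)$ is dense in $U_{j_0}$.

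For periodicity, I would pick a small ball $B \subseteq U_0$ and use $\Omega(f) = \T^2$ to produce $n \geq 1$ with $f^n(B) \cap B \neq \emptyset$; since $f^n(\overline{U_0}) \subseteq \overline{U_{j^n(0)}}$, the nonempty intersection $U_0 \cap U_{j^n(0)}$ forces $j^n(0) = 0$, so $U_0$ is periodic with some period $n_0$. The same argument applied to every component shows that the induced index map $j$ has only periodic orbits, so $j^{n_0}$ acts as the identity on the cycle through $0$ and sends every other cycle into itself; the unique $k$ with $j^{n_0}(k) = 0$ is therefore $k = 0$, giving $f^{-n_0}(U_0) \subseteq U_0$. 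The hardest part of the argument will be the middle step, specifically the extraction of concrete components $U_{01}, U_{02}$ with $p$ in both frontiers, since a priori $V_{i_1}^0$ or $V_{i_2}^0$ might have infinitely many components accumulating at $p$; I expect the genericity of $p$ together with Lemma \ref{lema 2} to preclude such behavior. A secondary difficulty is justifying the sheet-count argument for surjectivity rigorously in the $C^0$ setting without smoothness hypotheses.
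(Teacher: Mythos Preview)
Your strategy for forward uniqueness---decomposing $U_0$ into the open pieces $V_i^0 = U_0 \cap f^{-1}(U_i)$ and invoking Corollary~\ref{cor 3} at a common frontier point---is exactly the paper's, only spelled out in more detail; the paper simply writes ``by Corollary~\ref{cor 3}'' without isolating specific components $U_{01}, U_{02}$, so the accumulation concern you flag is glossed over there as well. Your periodicity and backward-invariance arguments via the index map $j$ also match the paper's.

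The one substantive divergence is how you reach the \emph{equality} $f(\overline{U_0}) = \overline{U_{j_0}}$. Your sheet-count argument is, as you acknowledge, shaky in the $C^0$ category: the set of regular values need not be connected (or even dense) in $U_{j_0}$, so local constancy of $\#(f^{-1}(q)\cap U_0)$ does not propagate. The paper sidesteps this entirely by reordering the logic. After forward uniqueness it uses $\Omega(f)=\T^2$ to see that every component is periodic under the forward index map; on each periodic cycle this map is then a bijection, which yields backward uniqueness $f^{-1}(U_i) \subseteq U_{j_i}$ directly (two components $U_j, U_k$ meeting $f^{-1}(U_i)$ would both be predecessors of $i$ in a single cycle, hence equal). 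Equality now follows from the global identity $f(\overline{U}) = \overline{U}$ of Remark~\ref{rmk 1}: any $y \in U_i$ has a preimage in $U$, which must lie in $U_{j_i}$, so $U_i \subseteq f(U_{j_i})$ and hence $\overline{U_i} = f(\overline{U_{j_i}})$. No local degree or Sard-type input is needed. In your write-up, simply moving the backward-uniqueness paragraph ahead of the surjectivity claim eliminates the problematic step.
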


\begin{proof}
Suppose that $f(U_i)$ intersect at least two connected components of $U$. Then, by Corollary \ref{cor 3}, it follows that there
exists a non-generic critical point, contradicting that $S_f$ is a generic critical set. Thus, we have that for each
connected component $U_i$ of $U$, $f(U_i)$ must intersect a unique connected component $U_{j_i}$ of $U$.
In particular, since $f(\partial U_i) \subseteq \partial U$, one has $f(\overline{U}_i) \subseteq \overline{U}_{j_i}$.
More precisely, one has that for each connected component $U_i$ of $U$ there exists a unique $U_{j_i}$ such that
$f(\overline{U_i})\subseteq \overline{U}_{j_i}$.

We want to prove that every connected component $U_i$ of $U$ is periodic but before that,
we prove that for each $U_i$ there exists a unique $U_{j}$ such that $f^{-1}(U_i)\subseteq U_{j}$.

Indeed, suppose that $f^{-1}(U_i)$ intersects at least two connected components $U_j$ and $U_k$ of $U$. Then, by we saw above,
we have that $f(\overline{U}_j)\subseteq \overline{U}_i$ and $f(\overline{U}_k)\subseteq \overline{U}_i$.
Since $\Omega(f)=\T^2$, there exist $n_i, n_k \geq 1$ such that $f^{n_j}(\overline{U}_j)\subseteq \overline{U}_j$ and
$f^{n_j}(\overline{U}_j)\subseteq \overline{U}_j$ that imply $f^{n_j-1}(U_i)\subseteq U_j$ and $f^{n_k-1}(U_i)\subseteq U_k$.
Hence, one has $n_j=n_k$ and $U_j=U_k$.

Therefore, for each connected component $U_i$ of $U$ there exist unique $U_{j_i}$ and $U_{k_i}$ such that
$f^{-1}(U_i)\subseteq U_{j_i}$ and $f(\overline{U}_i)\subseteq \overline{U}_{k_i}$ implying that
$f$ preserves the connected components of $U$, $f^{n_i}(\overline{U}_i)=\overline{U}_i$,
and $f^{-n_i}(U_i)\subseteq U_i$.
\end{proof}

\begin{cor}\label{lema 5}
There is a finite number of connected components of $U$.
\end{cor}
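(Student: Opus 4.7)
The plan is to argue by contradiction, assuming that $U$ has infinitely many connected components $\{U_\alpha\}_{\alpha\in I}$.

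First, I would upgrade the conclusions of Lemma \ref{lema 3} to a full bijection structure on the index set $I$. That lemma provides, for each $\alpha$, a unique $k_\alpha\in I$ with $f(\overline{U_\alpha})=\overline{U_{k_\alpha}}$, together with a unique $j_\alpha\in I$ such that $f^{-1}(U_\alpha)\subseteq U_{j_\alpha}$. Using the surjectivity of $f$ and the two uniqueness clauses, one checks that $k\circ j=j\circ k=\mathrm{id}_I$: indeed $f(\overline{U_{j_\alpha}})\supseteq f(f^{-1}(U_\alpha))=U_\alpha$ forces $k_{j_\alpha}=\alpha$, and symmetrically $U_\alpha\cap f^{-1}(U_{k_\alpha})\neq\emptyset$ forces $j_{k_\alpha}=\alpha$. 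So $f$ induces a bijection $\sigma:=k$ on $I$, and the periodicity part of Lemma \ref{lema 3} says every $\sigma$-orbit is a finite cycle. Under the contradiction hypothesis, $I$ therefore decomposes into infinitely many finite $\sigma$-cycles.

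Second, I would exploit the topological degree $d\geq 2$. Since $f^{-1}(U_\alpha)\subseteq U_{j_\alpha}$, all $d$ preimages of any generic point of $U_\alpha$ lie in the single component $U_{j_\alpha}$, so $f\colon U_{j_\alpha}\to U_\alpha$ is generically $d$-to-$1$. Iterating around a $\sigma$-cycle of length $n_\alpha$, the map $f^{n_\alpha}|_{\overline{U_\alpha}}\colon\overline{U_\alpha}\to\overline{U_\alpha}$ is a surjective self-map that is generically $d^{n_\alpha}$-to-$1$, with $d^{n_\alpha}\geq 2$. Intuitively, such a highly branched self-map cannot be supported on an arbitrarily small piece of $\T^2$.

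Finally, I would close the argument by establishing a uniform positive lower bound $m(U_\alpha)\geq\delta$ on the Lebesgue area of the components. Since the $U_\alpha$ are pairwise disjoint open subsets of $\T^2$, the bound $\sum_\alpha m(U_\alpha)\leq 1$ then allows only finitely many components, yielding the contradiction. The main obstacle is producing this $\delta$. My preferred approach is to pass to the universal cover $\R^2$: because $\det L=d\geq 2$, the linear part $L$ has at least one eigenvalue of modulus strictly greater than $1$, so $L$ expands in some direction. A lift of $\overline{U_\alpha}$ contained in a bounded region of $\R^2$ could not sustain an $f^{n_\alpha}$-invariant structure of local degree $\geq 2$, since iterates of any lift $\widetilde{f}=L+(\text{bounded})$ would eventually push points beyond the bounds of that region. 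Hence each $\overline{U_\alpha}$ must be essential in $\T^2$ (its lift is unbounded in some direction), and essentiality supplies the uniform area lower bound via the systole of $\T^2$.
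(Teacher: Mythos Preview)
Your first two paragraphs are fine, but the approach then diverges from the paper's and the final step does not go through.

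The paper's argument is a two-line observation exploiting how $U$ was \emph{constructed} in Lemma~\ref{lema 1}: by equation~\eqref{def-U} one has $U=\mathrm{int}\bigl(\,\overline{\bigcup_{n\ge 0}f^{-n}(U_0')}\,\bigr)$, so there is a single component $U_0$ of $U$ whose backward orbit is dense in $U$. Combined with Lemma~\ref{lema 3} (each $f^{-1}(U_i)$ lies in a unique component and every component is periodic), this forces every component of $U$ to lie in the one finite $\sigma$-cycle through $U_0$. In your language: the construction of $U$ already rules out the possibility of infinitely many $\sigma$-cycles, with no measure-theoretic or essentiality input needed.

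The genuine gap in your route is the claim that ``essentiality supplies the uniform area lower bound via the systole of $\T^2$.'' The systole bounds from below the \emph{length} of a homotopically non-trivial loop, not the \emph{area} of an open set containing one. Concretely, the annuli $A_n=\{(x,y)\in\T^2: 2^{-(n+1)}<y<2^{-n}\}$ for $n\ge 1$ are infinitely many pairwise disjoint essential connected open subsets of $\T^2$ with $m(A_n)=2^{-(n+1)}\to 0$. So even granting that every $U_\alpha$ is essential, no uniform $\delta>0$ follows and your contradiction does not close. (Your essentiality sketch is also not rigorous: the passage from ``local degree $\ge 2$'' and ``$L$ has an eigenvalue of modulus $>1$'' to ``a bounded lift cannot be $\tilde f^{n_\alpha}$-invariant'' would need a genuine argument; the paper's own essentiality proof, Lemma~\ref{lema 8}, goes instead through the generic-critical-point hypothesis.) The remedy is not to repair the area estimate but to use, as the paper does, that $U$ is generated by the backward orbit of a single open set.
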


\begin{proof}
By definition of $U$ (see equation \ref{def-U}), we can take a connected component $U_0$ of $U$ such that
$\overline{U}=\overline{\cup_{n\geq 0}f^{-n}(U_0)}$. Hence and by Lemma \ref{lema 3}, for each connected
component $U_j$ of $U$ there exists $n_j \geq 1$ and $n_0\geq 1$ such that $f^{n_j}(\overline{U}_j)=\overline{U}_0$
and $f^{n_0}(\overline{U}_0)=\overline{U}_0$. Therefore, $U$ has finitely many connected components.
\end{proof}

\section{Essential sets}\label{section-5}

Now, our goal is to show that $f$ for a non-wandering endomorphism with topological degree at least two and generic critical set
that is not itself transitive, every connected component of $U$ has fundamental group with just one generator in the fundamental
group of the torus. Before to formalize this idea, let us fix some notations. Let $L$ be the linear part of $f$ which is an
invertible matrix in $M_2(\Z)$ and has determinant of modulus at least two. Let $\pi:\R^2 \to \T^2$ be the universal
covering of the torus and let $\tilde{f}:\R^2 \to \R^2$ be a lift of $f$. It is known that
$\tilde{f}(\tilde{x}+v)=L(v)+\tilde{f}(\tilde{x})$ for every $\tilde{x} \in \R^2$ and $v \in \Z$.  

\begin{defi}\label{def-1}
We say that a connected open set $A$ in $\T^2$ is essential if for every connected component $\widetilde{A}$ of $\pi^{-1}(A)$ in $\R^2$,
$$\pi:=\pi|_{\widetilde{A}}:\widetilde{A} \to A$$ is not a homeomorphism. Otherwise, we say that $A$ is inessential.
\end{defi}

The following proposition shows properties of the essential sets.

\begin{prop}\label{prop 1}
Let $W\subseteq \T^2$ be a connected open set. Then the following are equivalent:
\begin{enumerate}
\item[$(i)$] $W$ is essential;
\item[$(ii)$] $W$ contains a loop homotopically non-trivial in $\T^2$;
\item[$(iii)$] there is a non-trivial deck transformation $T_w:\R^2\to \R^2$ such that every connected component of $\pi^{-1}(W)$
is $T_w$-invariant.
\end{enumerate}
Moreover, if $W$ is path connected in $\T^2$, then $i_\ast:\pi_1(W,x)\to \pi_1(\T^2,x)$ is a non-trivial map, where $x\in W$ and 
$i:W\hookrightarrow \T^2$ is the inclusion.
\end{prop}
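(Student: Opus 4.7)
The plan is to establish the three equivalences in the cyclic order $(i)\Rightarrow(ii)\Rightarrow(iii)\Rightarrow(i)$ and then deduce the moreover clause. The key ingredients, all standard covering-space facts, are: $\pi:\R^2\to\T^2$ is a regular covering with abelian deck group $\Z^2$ acting transitively on each fiber; any connected open set $W\subseteq\T^2$ is locally path connected, so the restriction of $\pi$ to any component of $\pi^{-1}(W)$ is itself a covering map (in particular surjective onto $W$); and a path starting in one component of $\pi^{-1}(W)$ stays in that component.

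For $(i)\Rightarrow(ii)$, I fix a component $\widetilde{W}$ of $\pi^{-1}(W)$ on which $\pi|_{\widetilde{W}}$ fails to be a homeomorphism. Since $\pi|_{\widetilde{W}}:\widetilde{W}\to W$ is a surjective covering, the only way it can fail to be a homeomorphism is to fail to be injective, yielding $\tilde{x}_1\neq\tilde{x}_2$ in $\widetilde{W}$ with $\pi(\tilde{x}_1)=\pi(\tilde{x}_2)$; hence $v:=\tilde{x}_2-\tilde{x}_1\in\Z^2\setminus\{0\}$. A path in $\widetilde{W}$ joining $\tilde{x}_1$ to $\tilde{x}_2$ (which exists because $\widetilde{W}$ is open and connected, hence path connected) projects to a loop in $W$ whose lift does not close up, so it represents a non-trivial element of $\pi_1(\T^2)$.

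For $(ii)\Rightarrow(iii)$, I take a loop $\gamma\subset W$ based at some $x_0$ representing a non-trivial class in $\pi_1(\T^2)$, fix a lift $\tilde{x}_0\in\pi^{-1}(x_0)$, and write the endpoint of the lifted path as $\tilde{x}_0+v$ with $v\in\Z^2\setminus\{0\}$. The translation $T_w(\tilde{z})=\tilde{z}+v$ is the candidate deck transformation. Because $\tilde{\gamma}$ stays inside the component $\widetilde{W}_0$ containing $\tilde{x}_0$, we get $T_w(\tilde{x}_0)\in\widetilde{W}_0$, and since $T_w$ sends components of $\pi^{-1}(W)$ to components, this forces $T_w(\widetilde{W}_0)=\widetilde{W}_0$. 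For any other component $\widetilde{W}'$, the transitive action of the deck group on fibers (together with path lifting, which identifies components of $\pi^{-1}(W)$ above $x_0$ with the fiber $\pi^{-1}(x_0)$) produces a deck transformation $S$ with $S(\widetilde{W}_0)=\widetilde{W}'$; since $\Z^2$ is abelian, $T_w(\widetilde{W}')=T_wS(\widetilde{W}_0)=ST_w(\widetilde{W}_0)=S(\widetilde{W}_0)=\widetilde{W}'$. For $(iii)\Rightarrow(i)$, the hypothesis provides a fixed-point-free homeomorphism $T_w$ of each component satisfying $\pi\circ T_w=\pi$, so $\pi|_{\widetilde{W}}$ is not injective, hence not a homeomorphism.

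Finally, the moreover clause follows by taking the non-trivial loop in $W$ supplied by $(ii)$ and, using path connectedness, conjugating by a path from $x$ to its basepoint to obtain a loop based at $x$ whose class in $\pi_1(W,x)$ maps to a non-trivial element of $\pi_1(\T^2,x)$. I expect the main bookkeeping point to be in $(ii)\Rightarrow(iii)$: I must guarantee that $T_w$ preserves \emph{every} component of $\pi^{-1}(W)$, not merely the one containing $\tilde{x}_0$; this is precisely where the commutativity of the deck group $\Z^2$ enters in a decisive way.
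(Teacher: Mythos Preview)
Your argument is correct and complete. The paper states this proposition without proof, treating it as a standard covering-space fact, so there is no authorial argument to compare against; your cyclic proof $(i)\Rightarrow(ii)\Rightarrow(iii)\Rightarrow(i)$ is the natural one, and your emphasis on the commutativity of the deck group $\Z^2$ in passing from invariance of one component to invariance of all components in $(ii)\Rightarrow(iii)$ is exactly the point that deserves care.
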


Heuristically, an essential set is a set that every connected component of its lift has infinite volume.

The following lemma is fundamental in the proof of Main Theorem. That lemma is interesting, because it shows
that every closure of a connected component of $U$ contains a closed curve homotopically non-trivial in $\T^2$.

\begin{lemma}\label{lema 4}
Let $U_j$ be any connected component of $U$. Then $\overline{U}_j$ contains a closed curve homotopically non-trivial in $\T^2$.
\end{lemma}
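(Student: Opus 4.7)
The plan is to argue by contradiction. Assume $\overline{U}_j$ contains no homotopically non-trivial loop in $\T^2$. Then the inclusion $\overline{U}_j\hookrightarrow\T^2$ induces the trivial map on fundamental groups, so in particular $U_j$ is inessential in the sense of Proposition \ref{prop 1}. Fixing a connected component $\tilde{U}_j$ of $\pi^{-1}(U_j)$, the covering $\pi^{-1}(\overline{U}_j)\to\overline{U}_j$ is trivial; hence the connected components of $\pi^{-1}(\overline{U}_j)$ are the pairwise disjoint closed ``sheets'' $\overline{\tilde{U}_j}+v$, $v\in\Z^2$.

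Set $g=f^{n_j}$. By Lemma \ref{lema 3} we have $g(\overline{U}_j)=\overline{U}_j$ and $g^{-1}(U_j)\subseteq U_j$. Since different lifts of $g$ differ by elements of $\Z^2$, I can choose a lift $\tilde{g}$ with $\tilde{g}(\tilde{U}_j)\subseteq\overline{\tilde{U}_j}$: indeed, $\tilde{g}(\tilde{U}_j)$ is connected and lies in $\pi^{-1}(\overline{U}_j)$, hence in a single sheet, which I normalize to $\overline{\tilde{U}_j}$. The equivariance $\tilde{g}(x+w)=\tilde{g}(x)+L^{n_j}w$ then yields, for every $w\in\Z^2$,
\[
\tilde{g}(\tilde{U}_j+w)\;\subseteq\;\overline{\tilde{U}_j+L^{n_j}w}.
\]

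The key step is a degree-theoretic count. Since $|\det L^{n_j}|=d^{n_j}\geq 2$, the sublattice $L^{n_j}(\Z^2)\subsetneq\Z^2$ is proper; pick $v_0\in\Z^2\setminus L^{n_j}(\Z^2)$ and any point $y\in\tilde{U}_j+v_0$. Because $g^{-1}(U_j)\subseteq U_j$, every preimage of $y$ under $\tilde{g}$ lies in $\pi^{-1}(U_j)=\bigsqcup_w(\tilde{U}_j+w)$; and if such a preimage lay in $\tilde{U}_j+w$, the inclusion above would force $y\in\overline{\tilde{U}_j+L^{n_j}w}$, so by disjointness of sheets $v_0=L^{n_j}w$, contradicting the choice of $v_0$. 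Hence $\tilde{g}^{-1}(y)=\emptyset$. But $\tilde{g}(x)=L^{n_j}x+(\text{bounded})$ with $L^{n_j}$ invertible shows that $\tilde{g}$ is proper, and its topological degree $d^{n_j}$ is non-zero, so $\tilde{g}\colon\R^2\to\R^2$ is surjective, giving the required contradiction.

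The main obstacle is the topological bookkeeping in the first two paragraphs: one must deduce from the hypothesis on $\overline{U}_j$ -- rather than merely on $U_j$ -- that the \emph{closures} of distinct sheets in $\pi^{-1}(\overline{U}_j)$ are genuinely disjoint, and one must produce a lift $\tilde{g}$ satisfying $\tilde{g}(\tilde{U}_j)\subseteq\overline{\tilde{U}_j}$ even in the presence of critical points of $g$ in $U_j$ that are sent to $\partial U_j$. Once these ingredients are in place, the contradiction is an essentially one-line consequence of the fact that $L^{n_j}$ does not act surjectively on $\Z^2$ whenever $d\geq 2$.
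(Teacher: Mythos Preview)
Your argument is sound and reaches the conclusion by a route genuinely different from the paper's. The paper does not argue by contradiction: assuming only that $U_0$ is inessential (so that the \emph{open} sheets $\tilde U_0+v$ are pairwise disjoint), it picks $w\in\Z^2\setminus L(\Z^2)$, uses the non-wandering hypothesis directly to show that the open set $W'=\mathrm{int}\bigl(\pi(\tilde f^{-1}(w+\tilde f(\tilde U_0)))\bigr)$ must lie in $U_0$, and from this extracts points $\tilde x,\tilde y\in\tilde U_0$ with $\tilde f(\tilde y)=\tilde f(\tilde x)+w$; a path in $\tilde U_0$ from $\tilde x$ to $\tilde y$ then pushes forward under $f$ to a closed curve of homology class $w$ inside $\overline U_1=f(\overline U_0)$, and iterating yields the required loop in every $\overline U_j$. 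Both proofs pivot on the same lattice fact---$|\det L|\ge 2$ forces $L(\Z^2)\subsetneq\Z^2$---but you package it as ``surjectivity of $\tilde g$ versus $\tilde g^{-1}(\tilde U_j+v_0)=\emptyset$,'' while the paper packages it as an explicit construction of the loop. The paper's route has the advantage that it only ever needs disjointness of the open sheets, so the obstacle you flag about disjointness of the \emph{closed} sheets simply does not arise; the cost is an extra appeal to the non-wandering hypothesis (to pin down which component contains $W'$), which you avoid by relying solely on the conclusions of Lemma~\ref{lema 3} together with the properness/degree argument for surjectivity of $\tilde g$.
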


\begin{proof}
By Corollary \ref{lema 5}, we can suppose $\overline{U}_j=f^j(\overline{U}_0)$ and $U_{n_0}=U_0$.
If $U_0$ is essential there is nothing to prove. Now, suppose that $U_0$ is inessential.
Let $\widetilde{U}_0 \subset \R^2$ be a connected component of $\pi^{-1}(U_0)$, then, $\pi: \widetilde{U}_0\to U_0$ is injective.
Consider $w \in \Z^2 \backslash L(\Z^2)$, such $w$ exists because $|\det(L)|\geq 2$. We 
denote by $W'$ the interior of the set $\pi(\tilde{f}^{-1}(w+\tilde{f}(\widetilde{U}_0)))$ that is not empty, because $f(U_0)$
has interior non-empty. Then
\begin{center}
$f(W')=f\circ \pi(\tilde{f}^{-1}(w+\tilde{f}(\widetilde{U}_0)))
=\pi(w+\tilde{f}(\widetilde{U}_0))=f(U_0)$. 
\end{center}
But as $W'$ is a open set and $f(W') \subset \overline{U}_1$, and so $W' \subset U$. Then, 
$$f^n(W')\cap W'\not=\emptyset \Longleftrightarrow n=kn_0, \ \ {\rm{for \ \ some}} \ \ k\geq 1.$$ 
In particular, $W'$ must intersect to $U_0$. Hence $W'$ is contained in $U_0$.

Since $W'$ is contained in $U_0$, we have that $\tilde{f}^{-1}(w+\tilde{f}(\widetilde{U}_0))$ is contained in $\widetilde{U}_0$.
Thus, $\tilde{f}(\widetilde{U}_0)$ contains $w+\tilde{f}(\widetilde{U}_0)$ and $\tilde{f}(\widetilde{U}_0)$.
Hence, there exist $\tilde{x}$ and $\tilde{y}$ in $\widetilde{U}_0$ such that
$\tilde{f}(\tilde{y})=w+\tilde{f}(\tilde{x})$, and so taking a curve $\tilde{\gamma}$ in $\widetilde{U}_0$
joining $\tilde{x}$ to $\tilde{y}$, one has $\tilde{f}(\tilde{\gamma})$ is a curve joining $\tilde{f}(\tilde{x})$ to $\tilde{f}(\tilde{y})$.
In particular, $\gamma:=\pi\circ \tilde{\gamma}$ is a curve such that $\gamma_f:=f\circ \gamma$ is a closed curve whose
homology class is $w$. Therefore, $f^{j-1}\circ \gamma_f$ is a closed curve in $\overline{U}_j$ whose homology class is $L^{j-1}(w)$.
\end{proof}

Next lemma is important because it shows that the closure of the connected components of $U$ and $V$ obtained in Lemma \ref{lema 1}
are essential sets. 

\begin{lemma}\label{lema 8}
If $U_j$ is a connected component of $U$ such that $f^n(\overline{U}_j)=\overline{U}_j$ for some $n\geq 1$. Then, $U_j$ is essential.
\end{lemma}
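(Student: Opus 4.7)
The plan is to proceed by contradiction. Assume $U_j$ is inessential; by Lemma~\ref{lema 3} we may relabel so that $U_j = U_0$ and $f^{n_0}(\overline{U}_0)=\overline{U}_0$. Fix a connected component $\widetilde{U}_0$ of $\pi^{-1}(U_0)$ together with a lift $\tilde{f}$ of $f$. Inessentiality together with the transitivity of the $\Z^2$-action on components of $\pi^{-1}(U_0)$ shows that every component is of the form $\widetilde{U}_0+v$ for some $v\in\Z^2$ and that these translates are pairwise disjoint; indeed, if any non-trivial translation preserved $\widetilde{U}_0$ then, by translation-equivariance, it would preserve every component, so Proposition~\ref{prop 1}(iii) would already force $U_0$ to be essential.

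The next step is to locate $\tilde{f}^{n_0}(\widetilde{U}_0)$ inside a single translate. Since $f$ is an open map (a standard consequence for a continuous surjection of $\T^2$ of non-zero topological degree) and $U_0$ is regular (being a connected component of the regular open set $U=\mathrm{int}(\overline{U'})$, one has $U_0=\mathrm{int}(\overline{U}_0)$), the open set $f^{n_0}(U_0)\subseteq\overline{U}_0$ is in fact contained in $U_0$. Lifting, $\tilde{f}^{n_0}(\widetilde{U}_0)\subseteq \pi^{-1}(U_0)$, and by connectedness of the image there exists $v_0\in\Z^2$ with
$$\tilde{f}^{n_0}(\widetilde{U}_0)\subseteq \widetilde{U}_0+v_0.$$

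The dynamical input is to reuse the intermediate estimate established inside the proof of Lemma~\ref{lema 4}: for some $w\in\Z^2\setminus L(\Z^2)$ (in particular $w\neq 0$) one has
$$\tilde{f}(\widetilde{U}_0)\supseteq \tilde{f}(\widetilde{U}_0)+w.$$
Applying $\tilde{f}^{n_0-1}$ to this inclusion and iterating the equivariance $\tilde{f}^{k}(\tilde{x}+v)=\tilde{f}^{k}(\tilde{x})+L^{k}(v)$ for $v\in\Z^2$ yields
$$\tilde{f}^{n_0}(\widetilde{U}_0)\supseteq \tilde{f}^{n_0}(\widetilde{U}_0)+u, \qquad u:=L^{n_0-1}(w),$$
with $u\neq 0$ by invertibility of $L$. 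The non-empty set $\tilde{f}^{n_0}(\widetilde{U}_0)+u$ is contained in $\widetilde{U}_0+v_0$ by the previous display and, by translating the inclusion of the preceding paragraph by $u$, also in $\widetilde{U}_0+(v_0+u)$; but these are distinct, hence disjoint, components of $\pi^{-1}(U_0)$, a contradiction.

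The main obstacle is establishing the strict containment $f^{n_0}(U_0)\subseteq U_0$ (rather than merely $\overline{U}_0$), which is what places the iterate inside a single open component and drives the contradiction; this rests on openness of $f$ and regularity of each connected component of $U$. Once those ingredients are in hand, the dynamical heart of the argument is just the single-line iteration of Lemma~\ref{lema 4}'s key inclusion by $\tilde{f}^{n_0-1}$.
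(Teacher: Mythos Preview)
There is a genuine gap: the claim that $f$ is an open map is false in the setting of this paper. You write that openness is ``a standard consequence for a continuous surjection of $\T^2$ of non-zero topological degree,'' but a map with a fold singularity---locally modeled on $(x,y)\mapsto(x^2,y)$---sends a small ball to a half-disk, which is not open. Folds and cusps are precisely the prototypical \emph{generic} critical points allowed here, so openness of $f$ cannot be assumed.

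This breaks the argument at the crucial step. Without openness you only get $f^{n_0}(U_0)\subseteq\overline{U}_0$, hence $\tilde f^{\,n_0}(\widetilde U_0)\subseteq\pi^{-1}(\overline{U}_0)$. Connectedness of $\tilde f^{\,n_0}(\widetilde U_0)$ no longer forces it into a single translate $\widetilde U_0+v_0$, because the closures of distinct components of $\pi^{-1}(U_0)$ may meet; thus the final disjointness contradiction does not fire. (Your observation that connected components of a regular open set are regular is correct, but regularity alone cannot upgrade $f^{n_0}(U_0)\subseteq\overline{U}_0$ to $f^{n_0}(U_0)\subseteq U_0$ unless $f^{n_0}(U_0)$ is already known to be open.)

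The paper's proof avoids this issue by using the generic-critical-set hypothesis directly rather than any openness of $f$. Assuming $U_0$ inessential, it argues that $\tilde f(\widetilde U_0)$ must cover at least two distinct integer translates of a lifted component of the image; pulling these back inside $\widetilde U_0$ yields two adjacent preimage pieces whose common boundary in $\widetilde U_0$ is nonempty, and then Lemma~\ref{lema 2} together with Corollary~\ref{cor 3} produces a non-generic critical point, contradicting the hypothesis on $S_f$. In short, the work you tried to do with openness is done in the paper by the generic-critical-point assumption.
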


\begin{proof}
Suppose, without loss of generality, $j=0$. Let $\widetilde{U}_0$ be a connected component of $\pi^{-1}(U_0)$ in $\R^2$.
Suppose that $U_0$ is an inessential set. Since the degree of $f$ is at least two and $f(\partial U_0)\subseteq \partial U$,
one has that $\tilde{f}(\widetilde{U}_0)$ contains at least two connected components of $\pi^{-1}(U_0)$ and
$\tilde{f}(\partial \widetilde{U}_0)\subseteq \partial \pi^{-1}(U_0)$.
Then, there exist at least two connected components of $\pi^{-1}(U_0)$, suppose, without loss of generality, that
$\widetilde{U_0}$ and $\widetilde{U_0}+v$ for some $v \in \Z^2$ are contained in $\tilde{f}(\widetilde{U}_0)$ and that
the component components $\widetilde{U}_{00}$ and $\widetilde{U}_{0v}$ of $\tilde{f}^{-1}(\widetilde{U}_0)$
and $\tilde{f}^{-1}(\widetilde{U}_w)$, respectively, contained in $\widetilde{U}_0$ so that
$\widetilde{C}=\partial \widetilde{U}_{00}\cap \partial\widetilde{U}_{0w}$ is a nonempty set in $\widetilde{U}_0$.

\begin{figure}[!h]
\centering
\includegraphics[scale=0.7]{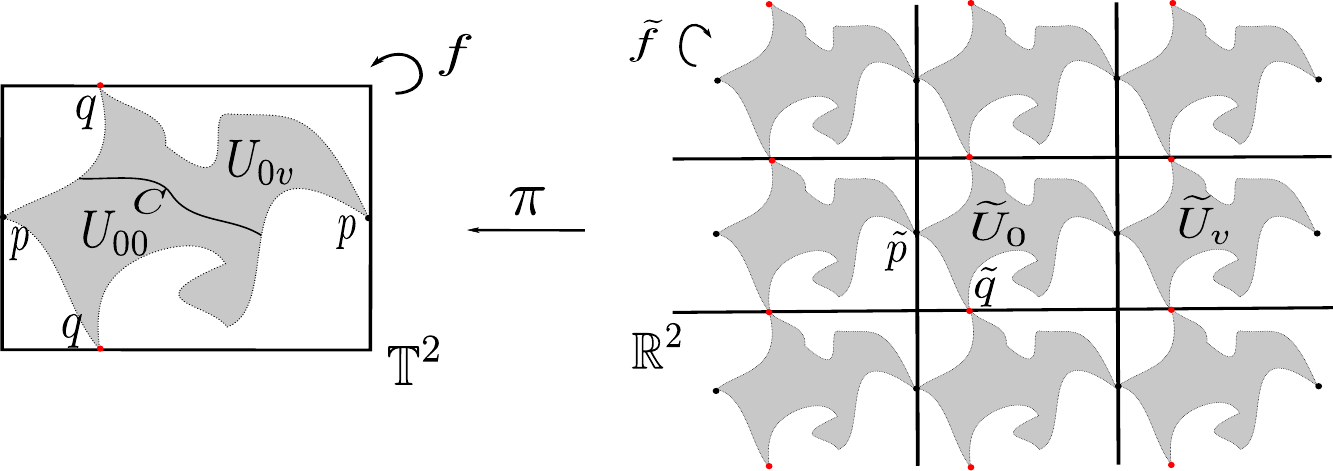}
\caption{The components $\widetilde{U}_0$ and $\widetilde{U}_v$.}
\end{figure}

Then, from the proof of Lemma \ref{lema 2}, $f(\pi(\widetilde{C}))$ is a point and ,by the proof of the Corollary \ref{cor 3},
there exists $p \in \pi(\widetilde{C})$ so that $p$ is not a generic critical point. Contradicting that $S_f$
is a generic critical set.

\end{proof}

The lemma below shows what happens when two essential sets are linearly independent.

\begin{lemma}\label{lema 6}
Suppose that $\gamma$ and $\sigma$ are loops in $\T^2$ such that $[\gamma]$ and $[\sigma]$ are linearly independent in $\Z^2$.
Then $\gamma$ and $\sigma$ intersect. 
\end{lemma}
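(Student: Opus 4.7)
The plan is to argue by contradiction. Assume $\gamma(S^1)\cap \sigma(S^1)=\emptyset$, with $v:=[\gamma]$ and $w:=[\sigma]$ linearly independent in $\Z^2\cong H_1(\T^2)$, and derive a topological contradiction by lifting to the universal cover $\pi:\R^2\to\T^2$.

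First I would pick lifts $\tilde\gamma_0,\tilde\sigma_0:[0,1]\to\R^2$ of $\gamma$ and $\sigma$; their endpoints satisfy $\tilde\gamma_0(1)=\tilde\gamma_0(0)+v$ and $\tilde\sigma_0(1)=\tilde\sigma_0(0)+w$. Extend both bi-infinitely by concatenating translates to obtain proper continuous maps $\tilde\Gamma,\tilde\Sigma:\R\to\R^2$ satisfying $\tilde\Gamma(t+1)=\tilde\Gamma(t)+v$ and $\tilde\Sigma(t+1)=\tilde\Sigma(t)+w$. Because $\tilde\gamma_0([0,1])$ is compact, $\tilde\Gamma(t)-tv$ stays bounded, so $\tilde\Gamma(t)/|\tilde\Gamma(t)|\to \pm v/|v|$ as $t\to\pm\infty$; analogously $\tilde\Sigma(t)/|\tilde\Sigma(t)|\to \pm w/|w|$. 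Moreover, the disjointness of $\gamma$ and $\sigma$ in $\T^2$ lifts to $\pi^{-1}(\gamma)\cap\pi^{-1}(\sigma)=\emptyset$, so in particular $\tilde\Gamma(\R)\cap\tilde\Sigma(\R)=\emptyset$.

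Next I would radially compactify $\R^2$ to the closed disk (e.g.\ via $x\mapsto x/(1+|x|)$), adding a circle at infinity $S^1_\infty$. By the asymptotic analysis above, $\tilde\Gamma$ extends to a continuous path in the closed disk with endpoints $\pm v/|v|\in S^1_\infty$, and $\tilde\Sigma$ extends to a continuous path with endpoints $\pm w/|w|$. Linear independence of $v$ and $w$ means these four points are distinct on $S^1_\infty$, and the two antipodal pairs link each other on the boundary circle. The desired contradiction then follows from the planar crossing lemma: two continuous paths in a closed $2$-disk whose boundary endpoints link each other must share an interior point, so $\tilde\Gamma(\R)\cap\tilde\Sigma(\R)\neq\emptyset$, and projecting by $\pi$ produces a point of $\gamma\cap\sigma$.

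The main technical point is applying the crossing lemma to paths that are only continuous, not embedded arcs, since the curves $\gamma$ and $\sigma$ in the hypothesis are allowed to have self-intersections. For embedded arcs the crossing lemma is an immediate consequence of the Jordan curve theorem, but in the general continuous case one has to invoke a winding-number or degree-theoretic argument (equivalently, the intersection pairing $H_1(\T^2;\Z)\times H_1(\T^2;\Z)\to\Z$ on singular homology, whose value on $([\gamma],[\sigma])$ equals $\det[v\mid w]\neq 0$). An alternative that avoids the disk compactification is to pass through the intermediate cyclic cover $\R^2/\Z v\cong S^1\times\R$: there $\gamma$ lifts to a loop winding once around $S^1$ while $\sigma$ lifts, after bi-infinite extension, to a proper arc joining the two ends (since the component of $w$ orthogonal to $v$ is nonzero), and the same homological/degree argument shows these must meet.
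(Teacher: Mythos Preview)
The paper does not actually prove this lemma; it simply cites Lemma~4.2 of \cite{Andersson}. Your argument therefore supplies more than the paper does, and the strategy is sound: lift to $\R^2$, compactify radially to a closed disk, observe that the two bi-infinite lifts $\tilde\Gamma,\tilde\Sigma$ extend to paths whose endpoints on the circle at infinity are the antipodal pairs $\pm v/|v|$ and $\pm w/|w|$ (distinct and hence linked, precisely because $v$ and $w$ are linearly independent), and then invoke a crossing result in the disk. You have also correctly isolated the one genuine technical point, namely that the crossing lemma must hold for merely continuous paths, since $\gamma$ and $\sigma$ are not assumed to be simple.

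Your proposed resolution via the algebraic intersection pairing on $H_1(\T^2;\Z)$ is in fact the cleanest way to handle the whole lemma and renders the compactification step unnecessary: if $\gamma(S^1)\cap\sigma(S^1)=\emptyset$, choose disjoint open neighbourhoods $A\supset\gamma(S^1)$ and $B\supset\sigma(S^1)$; then the Poincar\'e duals of $[\gamma]$ and $[\sigma]$ are represented by compactly supported cohomology classes on $A$ and $B$ respectively, whose cup product vanishes, forcing $[\gamma]\cdot[\sigma]=0$ and contradicting $[\gamma]\cdot[\sigma]=\det[v\mid w]\neq 0$. Either packaging is acceptable; the disk-compactification picture is more geometric, while the intersection-form argument is shorter and avoids the delicate crossing lemma for non-embedded paths.
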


\begin{proof}
See Lemma $4.2$ in \cite{Andersson}.
\end{proof}

The lemma below shows the existence of integer eigenvalues of $L$.

\begin{lemma}\label{lema 7}
The eigenvalues of $L$ are integers.
\end{lemma}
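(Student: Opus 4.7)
The plan is to locate a primitive $w \in \Z^2$ that serves as a common ``homological direction'' of every connected component of $U$ and of $V$, to show that $Lw = \lambda w$ for some $\lambda \in \Z$, and then to invoke the integrality of the characteristic polynomial of $L$ to conclude that the second eigenvalue is automatically an integer as well.

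First I would run the structural results of Section \ref{section-4} symmetrically for $U$ and $V$: by Lemma \ref{lema 1} they play interchangeable roles, so Lemma \ref{lema 3}, Corollary \ref{lema 5}, and Lemma \ref{lema 8} tell us that every connected component of $U$ or of $V$ is periodic, essential, and that there are only finitely many of them. By Proposition \ref{prop 1}, each such component $W$ induces a nontrivial homomorphism $\pi_1(W) \to \pi_1(\T^2) = \Z^2$. I would then prove that this image has rank exactly one. Indeed, if $U_i$ contained two loops $\alpha, \beta$ whose homology classes were linearly independent, then fixing any nontrivial loop $\gamma$ in any component $V_k$ of $V$, Lemma \ref{lema 6} would force both $[\alpha]$ and $[\beta]$ to be proportional to $[\gamma]$ (since both $\alpha$ and $\beta$ are disjoint from $\gamma$), contradicting their independence. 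A second application of Lemma \ref{lema 6} to loops living in two distinct components of $U \cup V$ forces all the rank-one generators to agree up to sign once chosen primitive; call this common vector $w$.

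Next I would use $f$-invariance of the decomposition into components. For any loop $\gamma \subset U_i$ with $[\gamma] = nw$, Lemma \ref{lema 3} places $f \circ \gamma$ inside a component $U_{k_i}$, whose loops again have homology classes that are multiples of $w$. Since the action of $f$ on $\pi_1(\T^2)$ is given by the linear part, $[f\circ \gamma] = L(nw) = n\, Lw$, so $Lw$ is a rational, and hence (since $w$ is primitive and $Lw \in \Z^2$) an integer, multiple $\lambda w$. Thus $\lambda$ is an integer eigenvalue of $L$; because the characteristic polynomial of $L$ lies in $\Z[x]$, the other eigenvalue $\mathrm{tr}(L) - \lambda$ is also an integer, finishing the proof.

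The step I expect to be the main obstacle is the rank-one claim: it is precisely where one must use a \emph{second} essential invariant family (a component of $V$) to rule out the possibility of a component of $U$ whose lifts wrap around two linearly independent homology directions. Fortunately, Lemma \ref{lema 1} delivers $U$ and $V$ simultaneously, and the essentiality/periodicity results proved for $U$ apply verbatim to $V$, so only a single auxiliary observation — that the $V$-loop and any alleged pair $\alpha, \beta \subset U_i$ are disjoint — is needed to close the argument.
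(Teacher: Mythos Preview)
Your argument is correct and follows essentially the same route as the paper's proof: pick nontrivial loops $\gamma\subset U_j$ and $\sigma\subset V_i$, use Lemma~\ref{lema 6} (disjointness $\Rightarrow$ linear dependence of homology classes) once for $\gamma,\sigma$ and once for $f\circ\gamma,\sigma$, and conclude that $L[\gamma]$ is an integer multiple of $[\gamma]$. Your write-up is in fact more complete than the paper's in two respects: you spell out the rank-one structure of $i_\ast\pi_1(U_i)$ explicitly, and you close the argument for the \emph{second} eigenvalue via $\mathrm{tr}(L)-\lambda\in\Z$, a step the paper leaves implicit. The only minor slip is that Lemma~\ref{lema 3} sends $f\circ\gamma$ into $\overline{U}_{k_i}$ rather than $U_{k_i}$, but since $\overline{U}_{k_i}\cap V_i=\emptyset$ (regularity of $V$), Lemma~\ref{lema 6} still applies and nothing is lost.
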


\begin{proof}
By Lemma \ref{lema 8}, the connected components $U_j$ and $V_i$ of $U$ and $V$ are essentials. We consider two
loops $\gamma$ and $\sigma$ in $U_j$ and $V_i$ such that $[\gamma]$ and $[\sigma]$ are different to zero in $\Z^2$.
As $U_j\cap V_i=\emptyset$, it follows, by Lemma \ref{lema 6}, that $[\gamma]$ and $[\sigma]$ are linearly dependent in $\Z^2$.
analogously, as $\overline{U}_{j+1}\cap V_i=\emptyset$ and $f \circ \gamma$ is loop in $\overline{U}_{j+1}$, we have that
$[f \circ \gamma]=L[\gamma]$ and $[\sigma]$ are linearly dependent in $\Z^2$, in particular 
$L[\gamma]$ and $[\gamma]$ are linearly dependent in $\Z^2$. Therefore, there exits $k \in \Z\backslash\{0\}$
such that $L[\gamma]=k[\gamma]$. This proves the lemma.
\end{proof}

The lemma below is fundamental. It shows that all connected components of $U$ and $V$ are essential.

\section{The proof of Main Theorem}\label{section-6}

Let $f:\T^2 \to \T^2$ be a non-wandering endomorphism with generic critical set and degree at least two which is not transitive.
Then we know from Lemma \ref{lema 1} that there exist $U$ and $V$ in $\T^2$ $f$-backward invariant regular open sets such that $\overline{U}$
and $\overline{V}$ are $f$-forward invariant sets. Since all critical points are generics, from Lemma \ref{lema 8} and Corollary \ref{lema 5}
follow that all connected component of $U$ and $V$ are essential and that $\overline{U}_0$ is periodic. Let
$\overline{U}_0, f(\overline{U}_0),\cdots,f^{n-1}(\overline{U}_0)$ be all connected components of $\overline{U}$ with
$\overline{U}_0=f^n(\overline{U}_0)$. Then, consider two connected components $\widetilde{U}_0$ and $\widetilde{V}_0$ of $\pi^{-1}(U_0)$
and $\pi^{-1}(V_0)$, respectively, and choose $\tilde{f}:\R^2\to \R^2$ a lift of $f$ such that
$\tilde{f}^n(\widetilde{U}_0)\subseteq {\bf{cl}}(\widetilde{U}_0)$.

\begin{figure}[!h]
\centering
\includegraphics[scale=0.8]{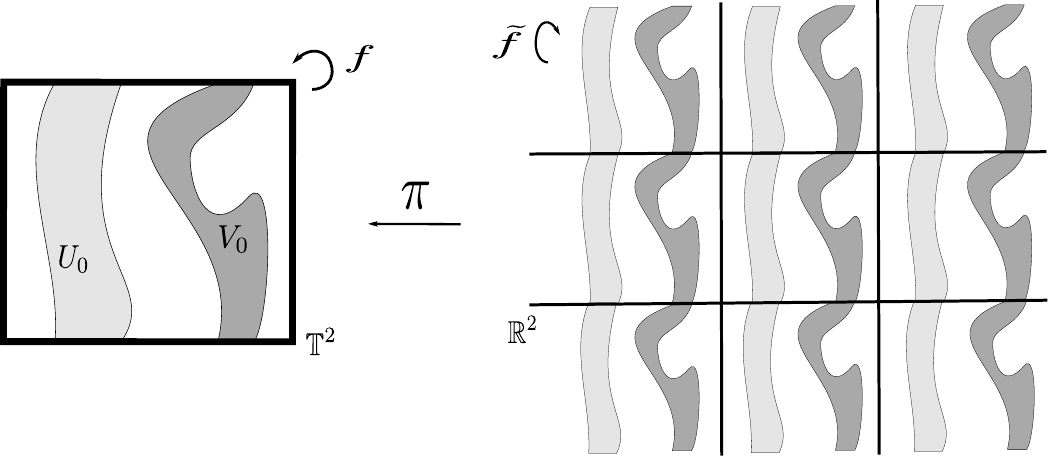}
\caption{The sets $U_0$ and $V_0$.}
\end{figure}

Let us now prove that $L$ has a real eigenvalue of modulus one. First, note that as $\overline{U}_0$ and $V_0$ are disjoints,
Lemma \ref{lema 7} implies that $L$ has integer eigenvalues $l$ and $k$. Let $w$ and $u$ be the eigenvectors of $L$ associated to $l$
and $k$, respectively, are in $\Z^2$. Suppose, without loss generality, that $w$ and $u=e_2$. That is, as $\widetilde{U}_0$ is
$T_u$-invariant, we have that $\widetilde{U}_0$ is a "vertical" component of $\pi^{-1}(U_0)$.

To finish, suppose that $|k|\geq 2$. Then, consider in $\R^2$ a curve $\widetilde{\gamma}$ which $\widetilde{\gamma}(0) \in \widetilde{U}_0$ and
$\widetilde{\gamma}(1)=\widetilde{\gamma}(0)+e_1$. Thus, $\widetilde{f}^{n}\circ\widetilde{\gamma}$ is a curve with
$\widetilde{f}^{n}\circ\widetilde{\gamma}(0)\in \widetilde{U}_0$ and \linebreak
$\widetilde{f}^{n}\circ\widetilde{\gamma}(1)=\widetilde{f}^{n}\circ\widetilde{\gamma}(0)+L^{n}(e_1)$. However, there exist $a$ and $b$ in $\Z$
with $b$ different from zero such that $e_1=ae_2+bw$. Hence, we have $L(e_1)=ake_2+blw$ and, in particular,

$$\tilde{f}(\widetilde{U}_0+e_1)\subseteq {\bf{cl}}(\widetilde{U}_0+L(e_1)).$$

\begin{figure}[!h]
\centering
\includegraphics[scale=0.8]{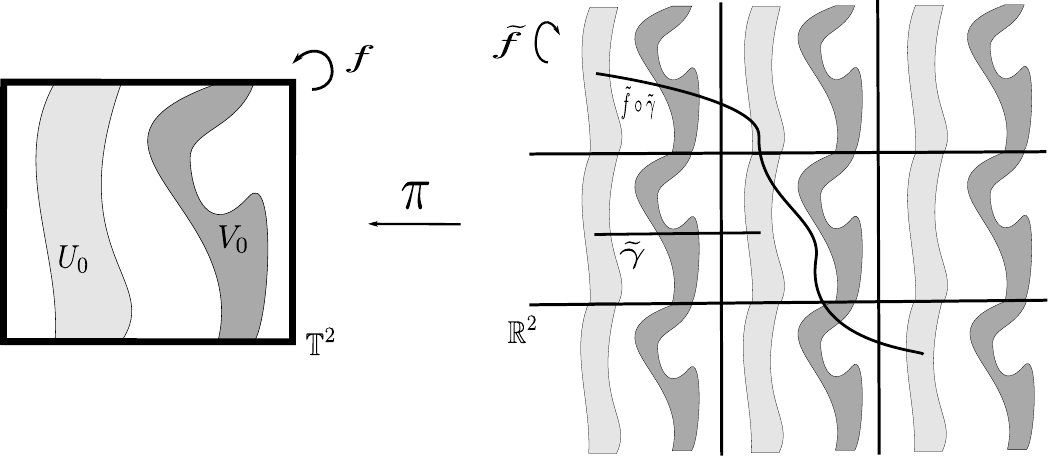}
\caption{ The curves $\widetilde{\gamma}$ and $\widetilde{f}\circ\widetilde{\gamma}$.}
\end{figure}

Then, if $|l| \geq 2$, we have that the first coordinate of $L(e_1)$ has modulus at least two. Hence there is $c \in \Z$ such that
$\widetilde{U}_r+ce_1$ is between $\widetilde{U}_0$ and $\widetilde{U}_0+L(e_1)$, and so, we have a contradiction because $U_r's$
are disjoint and $f^n$-backward invariant. Hence there is not a set $W$ in between $\widetilde{U}_0$ and $\widetilde{U}_0+e_1$ such that
$\tilde{f}^n(W)=\widetilde{U}_0+ce_1$.

Therefore,  $|l|=1$. And so, $L$ is not an Anosov endomorphism, contradiction. This proves the Main Theorem. 

\section{Examples}

Consider a map $f$ on $\mathbb{S}^1$ itself of form:
\begin{figure}[!h]
\centering
\includegraphics[scale=0.8]{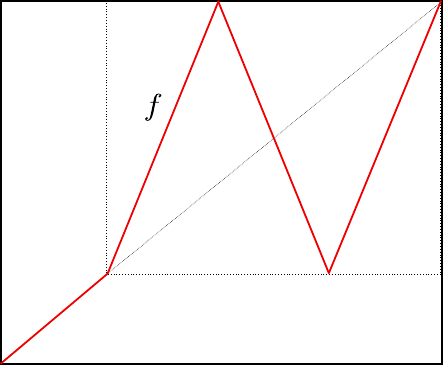}
\caption{The graph of $f$.}
\end{figure}\\
$\noindent$ such that $f$ is not transitive map, but is volume preserving. Let $g: \mathbb{S}^1 \to \mathbb{S}^1$ be any volume preserving
degree 2 map and let $H:\T^2 \to \T^2$ be any volume preserving endomorphism without critical points homotopic to $(x,y) \mapsto (2x, y)$.
Then $F: \T^2 \to \T^2$ given by $$F(x,y)= H(f(x),g(y))$$ is a volume preserving endomorphism with generic critical points and homotopic to
$(x,y) \mapsto (2x, 2y)$. Therefore, by the Main Theorem, $F$ is transitive. More general, given endomorphisms $f$ and $g$ on $\mathbb{S}^1$
itself which $f$ has critical points and $g$ is an expanding such that $f\times g: \T^2 \to \T^2,\, f\times g (x,y)=(f(x),g(y))$, is a volume
preserving endomorphism, then for every $H : \T^2 \to \T^2$ a volume preserving covering map such that $F=H \circ (f\times  g)$ is homotopic
to Anosov endomorphism degree at least two, we have $F$ is transitive.
 
\section*{Acknowledgments}
The author\footnote{The author has been supported by CAPES.} would like to express his gratitude to Professor Enrique Pujals and to Rafael Potrie,
as well as to Professor Cristina Lizana for their helpful comments and appropriate advices. He would also like to thank to Martin Andersson and
Andres Koropecki to the discussions about this work.


\bibliographystyle{alpha}

\bibliography{biblio-1}

\nocite{*}

\end{document}